\theoremstyle{plain}
\newtheorem{lemma}{Lemma}[section]
\newtheorem{theorem}{Theorem}[section]
\newtheorem{proposition}{Proposition}[section]
\newtheorem*{corollary}{Corollary}
\theoremstyle{remark}
\newtheorem{remark}{\bf Remark}
\theoremstyle{definition}
\newtheorem{example}{Example}[section]
\newtheorem{Illustration}{Illustration}[section]
\renewcommand{\maketitle}{
	\begin{center}
		\rule[1em]{\textwidth}{0mm}
		\baselineskip=0.30in
		{\bfseries \@title} \par
		\vspace{5mm}
		\baselineskip=0.2in
		{\bfseries \@author}\par
		\vspace{1mm}
		{\it \@address} \par
		{\small\tt \@email} \par
		\vspace{3mm}
		
	\end{center}
	\vspace{3mm}
}
\newcommand{\address}[1]{\def\@address{#1}}
\newcommand{\email}[1]{\def\@email{#1}}
\title{On the Clean Graph of a Ring}
\author{Randhir Singh$^{a,*}$, S. C. Patekar$^b$}
\address{$^a$Department of Mathematics, Savitribai Phule Pune University, Pune-411007, India\\
	$^b$Department of Mathematics, Savitribai Phule Pune University, Pune-411007, India\\
}
\email{$^a$randhir\textunderscore singh46@yahoo.com, $^b$shri82patekar@gmail.com}
\begin{document}
	\maketitle
	\begin{abstract}
		Let $R$ be a ring (not necessarily a commutative ring) with identity. The clean graph $Cl(R)$ of a ring $R$ is a graph with vertices in the form of an ordered pair $(e,u)$, where $e$ is an idempotent and $u$ is a unit of ring $R$, respectively. Two distinct vertices $(e,u)$ and $(f,v)$ are adjacent in $Cl(R)$ if and only if $ef=fe=0$ or $uv=vu=1$. In this study, we considered the induced subgraph $Cl_2(R)$ of $Cl(R)$. We determined the Wiener index of $Cl_2(R)$, and we showed $Cl_2(R)$ has a perfect matching. In addition, we determined the matching number of $Cl_2(R)$ if $|U(R)|$ is not even. 
		
		\vskip 1mm
		\noindent {\bf Keywords:} Idempotent; unit; clean graph; Wiener index; matching number\vskip 1mm
		\noindent {\bf MSC2020:} 05C25, 16U60, 05C12, 05C70.
	\end{abstract}
	
	\section{Introduction}
	Studying graphs related to algebraic structures is an interesting and long-standing way to gain insights into the different algebraic properties of graphs. This method allows the examination of various graph parameters through algebraic properties, leading to a more thorough understanding. For example, one can explore graphs associated with the ring $R$ with unity, which includes zero-divisor graphs and idempotent graphs.
	
	The zero-divisor graph of a commutative ring was first introduced by Beck \cite{IB}. Beck studied the problem of coloring the graph of a commutative ring with unity. The definition of the zero-divisor graph of a commutative ring with unity was later modified by Anderson et al. \cite{DP}. They defined the zero-divisor graph $\Gamma (R)$ of a commutative ring $R$ as a simple undirected graph. The vertex set $V(\Gamma(R))$ consists of non-zero zero-divisors of the commutative ring $R$ with unity, and two distinct vertices $x$ and $y$ are adjacent if and only if $xy=0$. Subsequently, many researchers have studied graphs associated with the algebraic structure (see \cite{Akb, DM, DLS}).
	
	In $1977$, Nicholson \cite{WK} introduced the concept of a clean ring, which has since attracted considerable research attention. An element of a ring is considered to be "clean" if it can be expressed as the sum of an idempotent and a unit. A ring is classified as clean if all its elements are clean. Akbari et al. \cite{Akb1} defined the Idempotent graph, denoted as $I(R)$, with nontrivial idempotents of ring $R$ as its vertex set. Two distinct vertices are adjacent in $I(R)$ if and only if their product is zero. In $2021$, Habibi et al. \cite{Habibi} introduced the concept of the clean graph to establish a connection between the properties of a clean ring and its graph-theoretic aspects. This development aims to enhance our understanding of the relationship between these concepts and their potential implications across various fields.
	
	The topological index of a graph is a numerical parameter that characterizes its topology by analyzing its connectivity patterns. These indices are alternatively referred to as molecular descriptors or connectivity indices. Topological indices play an important role in Chemistry and Computer science to study their physical properties using graph representation. In $1947$, Harold Wiener \cite{HW} introduced a topological index called the Wiener index. Wiener used the Wiener index to study the boiling point of Paraffin using a mathematical approach. Recently, many applications of the Wiener index arose; see \cite{Dinar, Eliasi, Nikoli, Spiro }.
	\section{Preliminary}
	In this study, we use several definitions and terminologies, which are as follows. 
	
	Throughout the study, $R$ is a commutative Artinian ring with unity. The ring of integers modulo $n$ is denoted by $\mathbb {Z}_n$. An element $e\in R$ is called \textit{idempotent} if $e^2=e$ and $u\in R$ is called \textit{unit} if there exists $v\in R$ such that $uv=vu=1$. The set of idempotents and unit elements of $R$ are denoted by $Id(R)$ and $U(R)$, respectively. Furthermore, $U'(R)=\{u\in U(R):u^2=1\}$ and $U''(R)=U(R)\backslash U'(R)$
	
	For graph $ G $, $V(G)$ denotes the set of vertices of graph $G$, and $E(G)$ denotes the set of edges of the graph $G$. If two vertices $u$ and $v$ are adjacent, we denote them by $u \sim v$.
	
	The distance between vertices $u$ and $v$ in $ G $, denoted by $d(u,v)$, is the number of edges on the shortest path between $u$ and $v$. If there is no path between $ u $ and $ v $, then $ d(u,v)=\infty $. The diameter of a graph $G$, denoted by $ \textit{diam(G)} $, is defined as the maximum distance between a pair of vertices. 
	
	The idempotent graph and the clean ring motivated Habibi et al.\cite{Habibi} to introduce the clean graph $Cl(R)$ with vertex set $V(Cl(R))=\{(e,u):e\in Id(R) \text{ and } u\in U(R)\}$. Two distinct vertices $(e,u),(f,v)\in V(Cl(R))$ are adjacent if and only if $ef=fe=0$ or $uv=vu=1$. The subgraph induced by the set $\{(e,u): e(\neq0)\in Id(R) \text{ and } u\in U(R)\}$ is denoted by $Cl_2(R)$. Throughout the study, we considered a clean graph as $Cl_2(R)$. 
	
	In this study, we determined the Wiener index of $Cl_2(R)$, the matching number, and perfect matching in $Cl_2(R)$. The Wiener index of the graph $G$ is a distance-based topological index introduced by H. Wiener\cite{HW} and defined to be the sum of all distances between all pairs of the vertices of graph $G$. Hosoya\cite{Hosoya} gave the mathematical representation of the Wiener index of graph $G$, defined by $W(G)=\frac{1}{2}\sum\limits_{v\in V(G)}d(u|G)$, where $d(u|G)=d(u,v)$ for any $v\in G$. A matching in a graph $G$ is a set $M$ of non-loop edges with no shared endpoints. A perfect matching in a graph is a matching that saturates every vertex. A matching number, denoted by $\mu(G)$, is the maximum size of a matching in $G$. A graph contains a perfect matching if $\mu(G)=\frac{|V|}{2}.$ Further, we determine the Wiener index of the clean graph $Cl_2(\mathbb{Z}_n)$ of ring of integers modulo $n$. The clean graph $Cl_2(R)$ of ring $R$ mainly deals with the units and idempotent. So, the number of units in $\mathbb{Z}_n$ is Euler's totient of $n$ and is denoted by $\phi(n)$. The Euler's phi (or totient) function of a positive integer $ n $ is the number of all positive integers less than $ n $ which are relatively prime to $ n $. If $n$ is a positive integer with prime factorization, $n = \prod_{i=1}^{k}{p_i}^{\alpha_i}$, then $\phi(n)=\prod_{i=1}^{k}({p_i}^{\alpha_i}-{p_i}^{\alpha_i-1})$. Hence, the set of units in $\mathbb{Z}_n$ is $U(\mathbb{Z}_n)=\{u_1=1,u_2,u_3,...u_r,u_{r+1},u_{r+2},...,u_{\phi(n)-1},u_{\phi(n)}\}$. 
	
	Let us recall some of the important results that play a crucial role in this study.
	\begin{proposition}[\textbf{\cite{Hewitt}}] \label{Prop 1}
		There are $2^k$ idempotents and $2^k-1$ non-zero idempotents in $\mathbb{Z}_n$, where $k$ is the number of distinct primes dividing $n$.

	\end{proposition}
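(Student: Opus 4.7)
The plan is to reduce the counting problem to a product of local rings via the Chinese Remainder Theorem (CRT) and then count idempotents in each prime-power factor individually. Writing $n = \prod_{i=1}^{k} p_i^{\alpha_i}$, the CRT gives a ring isomorphism
\[
\mathbb{Z}_n \;\cong\; \mathbb{Z}_{p_1^{\alpha_1}} \times \mathbb{Z}_{p_2^{\alpha_2}} \times \cdots \times \mathbb{Z}_{p_k^{\alpha_k}}.
\]
Since ring isomorphisms preserve multiplication (and hence the equation $x^2=x$), it suffices to count the idempotents of the right-hand side. An element $(e_1, e_2, \ldots, e_k)$ of the product is idempotent precisely when each coordinate $e_i$ is an idempotent of $\mathbb{Z}_{p_i^{\alpha_i}}$, so the count multiplies across factors.

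Next I would show that each local component $\mathbb{Z}_{p^\alpha}$ admits exactly the two idempotents $0$ and $1$. If $e^2 \equiv e \pmod{p^{\alpha}}$, then $p^\alpha$ divides $e(e-1)$. Because $\gcd(e, e-1) = 1$ in $\mathbb{Z}$, the prime power $p^\alpha$ cannot split between the two factors, so $p^\alpha \mid e$ or $p^\alpha \mid (e-1)$; equivalently, $e \equiv 0$ or $e \equiv 1 \pmod{p^\alpha}$. Thus each factor contributes exactly $2$ idempotents.

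Combining these two observations, the total number of idempotents in $\mathbb{Z}_n$ equals $\prod_{i=1}^{k} 2 = 2^k$. The only idempotent whose every CRT coordinate is $0$ is the zero element itself, so subtracting one gives $2^k - 1$ non-zero idempotents, which is precisely the claim.

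The argument is essentially routine, so there is no deep obstacle; the only subtle point to state carefully is the coprimality step $\gcd(e, e-1) = 1$, which rules out any ``mixed'' solution in $\mathbb{Z}_{p^\alpha}$ beyond $0$ and $1$. Everything else (CRT decomposition, componentwise characterization of idempotents in a direct product) is standard and can be invoked briefly.
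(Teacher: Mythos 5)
Your proof is correct and complete. The paper itself gives no proof of this proposition---it is simply cited from Hewitt and Zuckerman---so there is nothing to compare against line by line; but your CRT decomposition into the local factors $\mathbb{Z}_{p_i^{\alpha_i}}$, each with exactly the two idempotents $0$ and $1$, is the standard argument and is exactly the viewpoint the paper adopts later anyway (in Section~3 it writes an Artinian ring as a product of local rings and invokes $Id(R_i)=\{0,1\}$). The coprimality step $\gcd(e,e-1)=1$ is stated carefully, which is the only place a gap could have crept in.
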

	
	$Id({\mathbb{Z}_n})^*=\{ e_1,e_2,e_3,,...,e_{2^k-2},e_{2^k-1} \}$, where $e_1= 1$, $e_{2m}e_{2m+1}=0$ and $e_{2m+1} = 1 - e_{2m}$ for $m \in \{ 1,2, ...,\frac{2^{k}-4}{2} , \frac{2^{k}-2}{2} \}$
	
	\begin{theorem}[\textbf{\cite{Habibi}}]\label{T1}
		The $Cl_2(R)$ is connected if and only if $R$ has a nontrivial idempotent. Moreover, if $Cl_2(R)$ is connected, then $diam(Cl_2(R))\leq 3$.
	\end{theorem}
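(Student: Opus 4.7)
The plan is to split the biconditional into two directions and dispatch both by producing (or ruling out) short walks between arbitrary vertices; the harder half---giving the diameter bound---should be handled by a single uniform three-edge construction.

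For the $(\Rightarrow)$ direction I would argue the contrapositive: assume $\mathrm{Id}(R)=\{0,1\}$, so that every vertex of $Cl_2(R)$ has the form $(1,u)$ with $u\in U(R)$. Since $1\cdot 1=1\neq 0$, two distinct vertices $(1,u),(1,v)$ can be adjacent only through the unit condition $uv=vu=1$. Hence the graph is a disjoint union of $K_2$'s (inverse pairs $u\neq u^{-1}$) and isolated vertices (involutions); in particular $(1,1)$ is always isolated, so $Cl_2(R)$ is disconnected as soon as $|V(Cl_2(R))|\ge 2$.

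For the $(\Leftarrow)$ direction together with the diameter bound, I would fix a nontrivial idempotent $\varepsilon\in R$, so that $1-\varepsilon$ is also a nonzero idempotent and $\varepsilon(1-\varepsilon)=0$. Given any two vertices $(e_1,u_1),(e_2,u_2)$ I would exhibit the explicit walk
\[
(e_1,u_1)\;\sim\;(\varepsilon,u_1^{-1})\;\sim\;(1-\varepsilon,u_2^{-1})\;\sim\;(e_2,u_2),
\]
whose first and third edges use the unit identities $u_1u_1^{-1}=u_2^{-1}u_2=1$ and whose middle edge uses $\varepsilon(1-\varepsilon)=0$. Because all three adjacency conditions are forced, the walk simultaneously establishes connectedness and the bound $\mathrm{diam}(Cl_2(R))\le 3$.

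The main technical obstacle is that the four vertices above need not be genuinely distinct: consecutive pairs may coincide, in which case one must verify that the walk \emph{shortens} to a legitimate path. I would enumerate the degenerations---namely $(e_1,u_1)=(\varepsilon,u_1^{-1})$ (forcing $e_1=\varepsilon$ and $u_1^2=1$), $(1-\varepsilon,u_2^{-1})=(e_2,u_2)$ (symmetric), and $(\varepsilon,u_1^{-1})=(1-\varepsilon,u_2^{-1})$ (forcing $2\varepsilon=1$ and $u_1=u_2$)---and in each case check that the collapse yields a path of length at most $3$. Should $\varepsilon$ clash with both endpoints in a way that defeats the construction, I would swap $\varepsilon$ with $1-\varepsilon$ or, invoking the Artinian hypothesis, pick a different nontrivial idempotent. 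Beyond this book-keeping, the proof has no further moving parts.
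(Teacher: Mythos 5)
This theorem is quoted from Habibi et al.~\cite{Habibi} and is not proved anywhere in the present paper, so there is no internal proof to compare yours against; the closest the paper comes is the proof of Lemma \ref{Lemma 0.1}, whose Case 1 uses exactly your three-edge walk $(1,u_j)\sim(e_m,u_j^{-1})\sim(1-e_m,u_k^{-1})\sim(1,u_k)$. Your argument is essentially correct and self-contained. The backward direction is sound: both intermediate vertices $(\varepsilon,u_1^{-1})$ and $(1-\varepsilon,u_2^{-1})$ are legitimate (nonzero idempotent, genuine unit), all three adjacencies are forced, and the degeneracy analysis is actually easier than you anticipate --- the collapse $(\varepsilon,u_1^{-1})=(1-\varepsilon,u_2^{-1})$ cannot occur at all, since $\varepsilon=1-\varepsilon$ together with $\varepsilon^2=\varepsilon$ gives $\varepsilon=\varepsilon\cdot 2\varepsilon=2\varepsilon^2=2\varepsilon=1$, contradicting nontriviality, while the other coincidences merely shorten the walk, which only helps. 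A walk of length at most $3$ between any two distinct vertices always contains a path of length at most $3$, so no swapping of $\varepsilon$ for $1-\varepsilon$ is ever needed.

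The one genuine wrinkle is in your forward direction, and it is a defect of the statement rather than of your proof: if $\mathrm{Id}(R)=\{0,1\}$ and $|U(R)|=1$ (e.g.\ $R=\mathbb{Z}_2$), then $Cl_2(R)$ is the single vertex $(1,1)$, which is connected by the usual convention, yet $R$ has no nontrivial idempotent. Your phrasing ``disconnected as soon as $|V(Cl_2(R))|\ge 2$'' is the honest correct conclusion; if you want the biconditional exactly as stated you must either exclude this trivial ring or adopt the convention that a one-vertex graph does not count. Since the present paper only ever invokes the theorem for Artinian rings $R\cong R_1\times\cdots\times R_n$ with $n\ge 2$, where a nontrivial idempotent always exists, this edge case does not affect anything downstream.
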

	In Section \ref{Sec 3}, we begin with a remark in which we partition the vertex set $V(Cl_2(R)$. In Section \ref{Sec4}, we determine the matching number of the clean graph $Cl_2(R)$ of the Artinian ring $R$. 
	
	\section{Wiener Index of $Cl_2(R)$}\label{Sec 3}
	Let $R$ be a commutative Artinian ring with unity. From \cite[Theorem 8.7]{Atiyah}, $R\cong R_1\times R_2\times\cdots\times R_n$, where $ R_i $ is a local ring for each $ i=1,2,...,n$. By \cite[Proposition 19.2]{Lam}, $Id(R_i)=\{0,1\}$ for each $i=1,2,...,n$. So that, all the idempotents are $ (0,1) $- vectors. Clearly, $ |Id(R)| = 2^n $ and $ |Id(R)^*|=|Id(R)\backslash\{0\}| = 2^n-1 $. In order to make an easy computation, we need the following remark. 
	
	\begin{remark}\label{remark 1}
		Since vertex set $V(Cl_2(R))$ of $Cl_2(R)$ is $V(Cl_2(R))=Id(R)^*\times U(R)$ and $|Id(R)^*|=2^n-1$. We arrange the idempotents and units in such way that $e_{2m}e_{2m+1}=0$ and $U(R)=\{u_i\in U'(R):u_i^2=1, i=1,2,\cdots,u_{|U'(R)|}\}\cup \left\{u_j,u_k\in U''(R): u_ju_k=1, j=|U'(R)|+1,|U'(R)|+2, \cdots, |U'(R)|+\frac{|U''(R)|}{2}\right\}$. In order to make easy computation, we partition the vertices of $Cl_2(R)$ as $V_1\sqcup V_2\sqcup...\sqcup V_{2^n-1}$ such that $|V_i|=|U'(R)|+|U''(R)|$ for each $i$, $V_{2m}=\{(e_{2m},u_j): e_{2m}\in Id(R)^*, u_j \in U(R)\}$ and $V_{2m+1}=\{(e_{2m+1},u_k): e_{2m+1}\in Id(R)^*, u_k \in U(R)\}$ for $m=1,2,\ldots,\frac{2^n-2}{2}$; see Figure \ref{Figure 1}.
		
		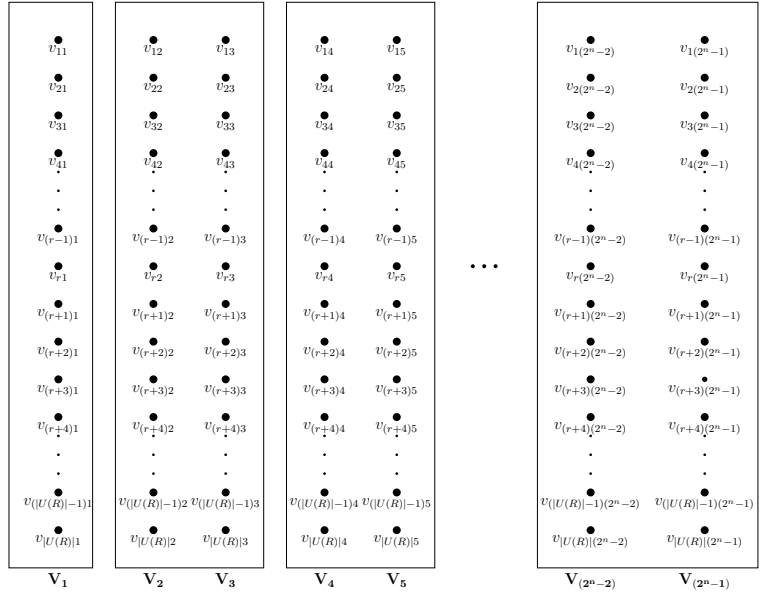
\begin{figure}[h!]
			
			\begin{center}
				\scalebox{0.5}{{\begin{tikzpicture} 
							\draw (-9.3,-7) rectangle (-7.1,8);
							\draw (-8,-7) node[below]{$\mathbf {V_1}$};
							\fill (-8,7) circle (3pt) node[below] {$v_{11}$}; 
							\fill (-8,6) circle (3pt) node[below] {$v_{21}$}; 
							\fill (-8,5) circle (3pt) node[below] {$v_{31}$}; 
							\fill (-8,4) circle (3pt) node[below] {$v_{41}$};
							\fill (-8,3.5) circle (1pt); 
							\fill (-8,3) circle (1pt); 
							\fill (-8,2.5) circle (1pt);
							\fill (-8,2) circle (3pt) node[below] {$v_{(r-1)1}$};
							\fill (-8,1) circle (3pt) node[below] {$v_{r1}$};
							\fill (-8,0) circle (3pt) node[below] {$v_{(r+1)1}$};
							\fill (-8,-1) circle (3pt) node[below] {$v_{(r+2)1}$};
							\fill (-8,-2) circle (3pt) node[below] {$v_{(r+3)1}$};
							\fill (-8,-3) circle (3pt) node[below] {$v_{(r+4)1}$};
							\fill (-8,-3.5) circle (1pt);
							\fill (-8,-4) circle (1pt);
							\fill (-8,-4.5) circle (1pt);
							\fill (-8,-5) circle (3pt) node[below] {$v_{(|U(R)|-1)1}$};
							\fill (-8,-6) circle (3pt) node[below] {$v_{|U(R)|1}$};
							

							\draw (-6.5,-7) rectangle (-2.6,8);
							\draw (-5.5,-7) node[below]{$\mathbf {V_2}$};
							\fill (-5.5,7) circle (3pt) node[below] {$v_{12}$}; 
							\fill (-5.5,6) circle (3pt) node[below] {$v_{22}$}; 
							\fill (-5.5,5) circle (3pt) node[below] {$v_{32}$}; 
							\fill (-5.5,4) circle (3pt) node[below] {$v_{42}$};
							\fill (-5.5,3.5) circle (1pt); 
							\fill (-5.5,3) circle (1pt); 
							\fill (-5.5,2.5) circle (1pt);
							\fill (-5.5,2) circle (3pt) node[below] {$v_{(r-1)2}$};
							\fill (-5.5,1) circle (3pt) node[below] {$v_{r2}$};
							\fill (-5.5,0) circle (3pt) node[below] {$v_{(r+1)2}$};
							\fill (-5.5,-1) circle (3pt) node[below] {$v_{(r+2)2}$};
							\fill (-5.5,-2) circle (3pt) node[below] {$v_{(r+3)2}$};
							\fill (-5.5,-3) circle (3pt) node[below] {$v_{(r+4)2}$};
							\fill (-5.5,-3.5) circle (1pt);
							\fill (-5.5,-4) circle (1pt);
							\fill (-5.5,-4.5) circle (1pt);
							\fill (-5.5,-5) circle (3pt) node[below] {$v_{(|U(R)|-1)2}$};
							\fill (-5.5,-6) circle (3pt) node[below] {$v_{|U(R)|2}$};

							\draw (-3.6,-7) node[below]{$\mathbf {V_3}$};
							\fill (-3.6,7) circle (3pt) node[below] {$v_{13}$}; 
							\fill (-3.6,6) circle (3pt) node[below] {$v_{23}$}; 
							\fill (-3.6,5) circle (3pt) node[below] {$v_{33}$}; 
							\fill (-3.6,4) circle (3pt) node[below] {$v_{43}$};
							\fill (-3.6,3.5) circle (1pt); 
							\fill (-3.6,3) circle (1pt); 
							\fill (-3.6,2.5) circle (1pt);
							\fill (-3.6,2) circle (3pt) node[below] {$v_{(r-1)3}$};
							\fill (-3.6,1) circle (3pt) node[below] {$v_{r3}$};
							\fill (-3.6,0) circle (3pt) node[below] {$v_{(r+1)3}$};
							\fill (-3.6,-1) circle (3pt) node[below] {$v_{(r+2)3}$};
							\fill (-3.6,-2) circle (3pt) node[below] {$v_{(r+3)3}$};
							\fill (-3.6,-3) circle (3pt) node[below] {$v_{(r+4)3}$};
							\fill (-3.6,-3.5) circle (1pt);
							\fill (-3.6,-4) circle (1pt);
							\fill (-3.6,-4.5) circle (1pt);
							\fill (-3.6,-5) circle (3pt) node[below] {$v_{(|U(R)|-1)3}$};
							\fill (-3.6,-6) circle (3pt) node[below] {$v_{|U(R)|3}$};
							\draw (-2,-7)  rectangle (1.9,8);
							\draw (-1,-7) node[below]{$\mathbf {V_4}$};
							\fill (-1,7) circle (3pt) node[below] {$v_{14}$}; 
							\fill (-1,6) circle (3pt) node[below] {$v_{24}$}; 
							\fill (-1,5) circle (3pt) node[below] {$v_{34}$}; 
							\fill (-1,4) circle (3pt) node[below] {$v_{44}$};
							\fill (-1,3.5) circle (1pt); 
							\fill (-1,3) circle (1pt); 
							\fill (-1,2.5) circle (1pt);
							\fill (-1,2) circle (3pt) node[below] {$v_{(r-1)4}$};
							\fill (-1,1) circle (3pt) node[below] {$v_{r4}$};
							\fill (-1,0) circle (3pt) node[below] {$v_{(r+1)4}$};
							\fill (-1,-1) circle (3pt) node[below] {$v_{(r+2)4}$};
							\fill (-1,-2) circle (3pt) node[below] {$v_{(r+3)4}$};
							\fill (-1,-3) circle (3pt) node[below] {$v_{(r+4)4}$};
							\fill (-1,-3.5) circle (1pt);
							\fill (-1,-4) circle (1pt);
							\fill (-1,-4.5) circle (1pt);
							\fill (-1,-5) circle (3pt) node[below] {$v_{(|U(R)|-1)4}$};
							\fill (-1,-6) circle (3pt) node[below] {$v_{|U(R)|4}$};

							\fill  (2.9,1) circle (1.5pt)(3.2,1) circle (1.5pt)(3.5,1)  circle (1.5pt);

							
							\draw(4.6,-7) rectangle (10.4,8);
							\draw (0.9,-7) node[below]{$\mathbf {V_{5}}$};
							\fill (0.9,7) circle (3pt) node[below] {$v_{15}$}; 
							\fill (0.9,6) circle (3pt) node[below] {$v_{25}$};
							\fill (0.9,5) circle (3pt) node[below] {$v_{35}$}; 
							\fill (0.9,4) circle (3pt) node[below] {$v_{45}$};
							\fill (0.9,3.5) circle (1pt); 
							\fill (0.9,3) circle (1pt); 
							\fill (0.9,2.5) circle (1pt);
							\fill (0.9,2) circle (3pt) node[below] {$v_{(r-1)5}$};
							\fill (0.9,1) circle (3pt) node[below] {$v_{r5}$};
							\fill (0.9,0) circle (3pt) node[below] {$v_{(r+1)5}$};
							\fill (0.9,-1) circle (3pt) node[below] {$v_{(r+2)5}$};
							\fill (0.9,-2) circle (3pt) node[below] {$v_{(r+3)5}$};
							\fill (0.9,-3) circle (3pt) node[below] {$v_{(r+4)5}$};
							\fill (0.9,-3.5) circle (1pt);
							\fill (0.9,-4) circle (1pt);
							\fill (0.9,-4.5) circle (1pt);
							\fill (0.9,-5) circle (3pt) node[below] {$v_{(|U(R)|-1)5}$};
							\fill (0.9,-6) circle (3pt) node[below] {$v_{|U(R)|5}$};
							
							
							\draw (6,-7) node[below]{$\mathbf {V_{(2^n-2)}}$};
							\fill (6,7) circle (3pt) node[below] {$v_{1{(2^n-2)}}$}; 
							\fill (6,6) circle (3pt) node[below] {$v_{2{(2^n-2)}}$};
							\fill (6,5) circle (3pt) node[below] {$v_{3(2^n-2)}$}; 
							\fill (6,4) circle (3pt) node[below] {$v_{4(2^n-2)}$};
							\fill (6,3.5) circle (1pt); 
							\fill (6,3) circle (1pt); 
							\fill (6,2.5) circle (1pt);
							\fill (6,2) circle (3pt) node[below] {$v_{(r-1)(2^n-2)}$};
							\fill (6,1) circle (3pt) node[below] {$v_{r(2^n-2)}$};
							\fill (6,0) circle (3pt) node[below] {$v_{(r+1)(2^n-2)}$};
							\fill (6,-1) circle (3pt) node[below] {$v_{(r+2)(2^n-2)}$};
							\fill (6,-2) circle (3pt) node[below] {$v_{(r+3)(2^n-2)}$};
							\fill (6,-3) circle (3pt) node[below] {$v_{(r+4)(2^n-2)}$};
							\fill (6,-3.5) circle (1pt);
							\fill (6,-4) circle (1pt);
							\fill (6,-4.5) circle (1pt);
							\fill (6,-5) circle (3pt) node[below] {$v_{(|U(R)|-1)(2^n-2)}$};
							\fill (6,-6) circle (3pt) node[below] {$v_{|U(R)|(2^n-2)}$};
							
							\draw (9,-7) node[below]{$\mathbf {V_{(2^n-1)}}$};
							\fill (9,7) circle (3pt) node[below] {$v_{1{(2^n-1)}}$}; 
							\fill (9,6) circle (3pt) node[below] {$v_{2{(2^n-1)}}$};
							\fill (9,5) circle (3pt) node[below] {$v_{3(2^n-1)}$}; 
							\fill (9,4) circle (3pt) node[below] {$v_{4(2^n-1)}$};
							\fill (9,3.5) circle (1pt); 
							\fill (9,3) circle (1pt); 
							\fill (9,2.5) circle (1pt);
							\fill (9,2) circle (3pt) node[below] {$v_{(r-1)(2^n-1)}$};
							\fill (9,1) circle (3pt) node[below] {$v_{r(2^n-1)}$};
							\fill (9,0) circle (3pt) node[below] {$v_{(r+1)(2^n-1)}$};
							\fill (9,-1) circle (3pt) node[below] {$v_{(r+2)(2^n-1)}$};
							\fill (9,-2) circle (2pt) node[below] {$v_{(r+3)(2^n-1)}$};
							\fill (9,-3) circle (3pt) node[below] {$v_{(r+4)(2^n-1)}$};
							
							\fill (9,-3.5) circle (1pt);
							\fill (9,-4) circle (1pt);
							\fill (9,-4.5) circle (1pt);
							\fill (9,-5) circle (3pt) node[below] {$v_{(|U(R)|-1)(2^n-1)}$};
							\fill (9,-6) circle (3pt) node[below] {$v_{|U(R)|(2^n-1)}$};
							
						\end{tikzpicture}\label{figure 4}}}
				
				\caption{Partition of $V(\mathbf{R})$}\label{Figure 1}\end{center}
		\end{figure}
		
	\end{remark} 
	
	To determine the Wiener index $W(Cl_2(R)$ of the clean graph $Cl_2(R)$ of a ring, we require the following lemma:
	\begin{lemma}\label{Lemma 0.1}
		Let the clean graph $Cl_2(R)$ of a commutative ring $R$ with unity be connected graph. Then for $(e_i,u_j), (e_k,u_l) \in V(Cl_2(R))$
		\begin{center}
			$d((e_i,u_j), (e_k,u_l))=\begin{cases}
				3 & \text{ if } e_i=e_k=1 \text{ and } u_ju_l\neq 1\\
				2 & \text{ if } e_i=e_k\neq 1 \text{ and } u_ju_l\neq 1\\
				2 & \text{ if } e_i\neq e_k, u_ju_l\neq 1 \text{ and } e_ie_k\neq0\\
				1 & \text{ if } e_i\neq e_k, e_ie_k=0 \text{ or } u_iu_l=1\\
				1 & \text{ if } e_i=e_k=1, u_iu_l=1.
			\end{cases}$
		\end{center}

	\end{lemma}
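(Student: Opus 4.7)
The plan is a case analysis driven directly by the adjacency rule of $Cl_2(R)$: $(e,u) \sim (f,v)$ iff $ef = fe = 0$ or $uv = vu = 1$. Theorem \ref{T1} already tells us that $\mathrm{diam}(Cl_2(R)) \le 3$, so every distance lies in $\{1,2,3\}$; it therefore suffices, in each case, to exhibit a path of the claimed length and, for the non-adjacent cases, to rule out shorter paths.

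The distance-$1$ lines (Cases 4 and 5) are immediate from the definition of adjacency. I note in passing that the first clause of Case 4 implicitly forces $e_i \neq e_k$, since if $e_i = e_k$ with $e_i e_k = 0$ then $e_i = e_i^2 = 0$, contradicting $e_i \in Id(R)^*$. For Case 2, where $e_i = e_k = e \neq 1$, the element $1-e$ is a nonzero idempotent with $e(1-e) = 0$, so for any unit $v$ the vertex $(1-e, v)$ is a common neighbor of $(e,u_j)$ and $(e,u_l)$ (distinctness is automatic since $1-e \neq e$), giving distance $\le 2$. The two vertices fail to be adjacent because $e \cdot e = e \neq 0$ and $u_j u_l \neq 1$, so the distance equals $2$.

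Case 3, where $e_i \neq e_k$ and $e_i e_k \neq 0$, requires a more delicate intermediate vertex that mixes the two adjacency clauses. At least one of $e_i, e_k$ differs from $1$ (otherwise $e_i = e_k = 1$), and up to symmetry I may assume $e_k \neq 1$. Then $(1-e_k, u_j^{-1})$ is adjacent to $(e_i, u_j)$ via the unit relation $u_j u_j^{-1} = 1$ and to $(e_k, u_l)$ via the idempotent relation $(1-e_k)e_k = 0$; distinctness uses $1-e_k \neq e_k$ together with the fact that $e_i = 1-e_k$ would force $e_i e_k = 0$, contradicting the hypothesis. The original pair is non-adjacent (both clauses fail), so the distance is exactly $2$.

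The main obstacle is Case 1, where $e_i = e_k = 1$ and $u_j u_l \neq 1$, because it is the only case that requires simultaneously a lower bound on the distance and an explicit witness for the upper bound. Any candidate common neighbor $(f, v)$ of $(1, u_j)$ and $(1, u_l)$ would need $1 \cdot f = f = 0$ (impossible, since $f \in Id(R)^*$) or $u_j v = 1$, forcing $v = u_j^{-1}$; the symmetric argument forces also $v = u_l^{-1}$, hence $u_j = u_l$ and $u_j u_l = 1$, a contradiction. So no path of length $2$ exists, and the distance is at least $3$. Theorem \ref{T1} (equivalently, the connectedness hypothesis) supplies a nontrivial idempotent $e$, and then $(1,u_j) - (e,u_j^{-1}) - (1-e,u_l^{-1}) - (1,u_l)$ is a path of length $3$ whose first and last edges use the unit condition and whose middle edge uses $e(1-e) = 0$; this gives distance exactly $3$, completing the analysis.
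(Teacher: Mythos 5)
Your proof is correct and follows essentially the same route as the paper's: the identical case split with the same intermediate vertices ($(1-e,v)$ in Case 2, $(1-e_k,u_j^{-1})$ in Case 3, and the length-$3$ path $(1,u_j)\sim(e,u_j^{-1})\sim(1-e,u_l^{-1})\sim(1,u_l)$ in Case 1). The added value of your write-up is that you also prove the lower bounds the paper only asserts --- non-adjacency in Cases 2 and 3, and the no-common-neighbour argument in Case 1 --- together with the distinctness checks on the intermediate vertices; the only blemish is that in Case 1 the conclusion ``$u_j=u_l$'' should be refuted by the distinctness of the two vertices rather than by ``$u_ju_l=1$'', which does not follow from $u_j=u_l$ in general.
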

	\begin{proof}
		Let $ R = \prod_{i=1}^n R_i$, where each $R_i $ is a local ring for $ i = 1, 2, \ldots, n $. Given that $ |U(R)| = 2 $, it follows that $ U(R) = U'(R) $. According to \cite[Theorem 2.2]{Habibi}, if $n\geq 2$, $ Cl_2(R) $ is connected with $ Diam(Cl_2(R)) \leq 3 $. Furthermore, if $ n \geq 2 $ and $ |U(R)| = 2 $, then $ Diam(Cl_2(R)) = 3 $, as  $(1,1) \sim (e_i,1) \sim (1-e_i,u_2) \sim (1,u_2)$.
		
		Now, we find the distance between the vertices $(e_i,u_j), (e_k,u_k)\in V(Cl_2(R))$ for the following cases:
		
		\noindent \textbf{Case 1.} If $e_i=e_k=1$ and $u_ju_l\neq1$ then there exist $(e_m,u_j^{-1}), (1-e_m,u_k^{-1})\in V(Cl_2(R))$ such that $(1,u_j)\sim(e_m,u_j^{-1})\sim(1-e_m,u_k^{-1})\sim(1,u_k)$ as $u_ju_j^{-1}=u_ku_k^{-1}=1$ and $e_m(1-e_m)=0$. Therefore, $d((1,u_j),(1,u_l))=3$.
		
		\noindent\textbf{Case 2.} If $e_i=e_k\neq 1$ and $u_ju_l\neq1$ then there exists $(1-e_i,u_p)\in V(Cl_2(R))$. Such that $(e_i, u_j)\sim (1-e_i, u_p)\sim(e_i,u_l)$ as $e_i(1-e_i)=0$. Therefore, $d((e_i,u_j),(e_i,u_l))=2$.
		
		\noindent\textbf{Case 3.} If $e_i\neq e_k$ and $u_ju_l\neq 1$ then there exists $(1-e_k,u_j^{-1})\in V(Cl_2(R))$ such that $(e_i, u_j)\sim (1-e_k,u_j^{-1})\sim (e_k, u_l)$. This leads to $d((e_i, u_j),(e_k,u_l))=2$ for $e_i\neq e_k\neq 1$ and $u_ju_l\neq 1$.
		
		\noindent \textbf{Case 4} and \textbf{Case 5 } are trivial as $e_ie_k=0$ and $u_ju_l=1$ that imply $ d((e_i,u_j),(e_k,u_l)) =1$.
	\end{proof}
	
	\begin{theorem}
		Let $R$ be a commutative ring with unity. Then 
		\begin{enumerate}
			\item $\displaystyle W(Cl_2(R))=\frac{1}{2}(2^n-2)(2^n-1), \text{ if }|U''(R)|=0 \text{ and }|U'(R)|=1 $
			\item $\displaystyle W(Cl_2(R))=\frac{1}{2}\left[|U'(R)|^2(2\cdot2^{2n}-17\cdot2^n+21)-|U'(R)|(2\cdot2^{2n}-8\cdot2^n+11)\right],\text{ if } |U''(R)|=0 \text{ and }|U'(R)|>1$
			\item $\frac{1}{2}\left[|U(R)|^2(2\cdot2^{2n}-5\cdot2^n+5)-|U(R)|(2^{2n}-2^n+3)+|U'(R)|2^n\right], \text{ if } |U''(R)|\neq0 \text{ and }\\|U'(R)|\geq 1$.
		\end{enumerate}
		
	\end{theorem}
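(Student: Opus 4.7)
The approach is to apply Lemma~\ref{Lemma 0.1} case by case and sum distances over all unordered pairs of distinct vertices, using the partition from Remark~\ref{remark 1} as the bookkeeping device.

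Item~(1) is handled directly. When $|U'(R)|=1$ and $|U''(R)|=0$, the only unit is $1$, so any two vertices $(e_i,1),(e_k,1)$ automatically satisfy the unit condition $uv=vu=1$ in the definition of $Cl_2(R)$. Hence $Cl_2(R)$ is the complete graph on $|Id(R)^*|=2^n-1$ vertices, and $W(Cl_2(R))=\binom{2^n-1}{2}=\tfrac{1}{2}(2^n-1)(2^n-2)$.

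For items~(2) and~(3), I would classify the unordered pairs $\{(e_i,u_j),(e_k,u_l)\}$ into four classes mirroring Lemma~\ref{Lemma 0.1}: (a) both have $e=1$; (b) both have the same idempotent $e_i=e_k\neq 1$; (c) $e_i\neq e_k$ with $e_ie_k=0$; (d) $e_i\neq e_k$ with $e_ie_k\neq 0$. The generic distances in these classes are $3,2,1,2$ respectively, and Lemma~\ref{Lemma 0.1} shows that the distance drops to $1$ precisely when the unit coordinates satisfy $u_ju_l=1$ (classes~(c) are already at distance~$1$). The key combinatorial input on the unit side is that the number of unordered pairs of distinct units with product~$1$ equals $|U''(R)|/2$, because involutions in $U'(R)$ are their own inverses while elements of $U''(R)$ pair with distinct inverses. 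I would then assemble $W(Cl_2(R))$ by multiplying, for each class, (distance)$\times$(number of idempotent pairs)$\times$(number of unit configurations), further split into the sub-cases $u_ju_l=1$ and $u_ju_l\neq 1$; collecting like terms produces a polynomial in $|U(R)|$, $|U'(R)|$, and $2^n$.

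Item~(2) then follows by specialising to $|U''(R)|=0$: every unit is an involution, so within each $V_t$ no two distinct units are mutual inverses and any cross-class inverse match forces $u_l=u_j$; these simplifications collapse several distance-$1$ contributions and leave an expression quadratic in $|U'(R)|$ only. For item~(3), the extra $|U''(R)|$-many nontrivial inverse pairs contribute additional distance-$1$ pairs both inside each $V_t$ and across classes~(a), (b), (d), and tracking them carefully produces the $|U'(R)|\cdot 2^n$ correction term distinguishing~(3) from~(2). The main obstacle is the double bookkeeping between idempotent classes~(c), (d) and the two unit sub-cases: a fixed pair of idempotents contributes at two different distances according to whether the units happen to be mutual inverses, so one must avoid double-counting when refining classes~(c) and~(d). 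The Remark~\ref{remark 1} partition $V_1\sqcup V_2\sqcup\cdots\sqcup V_{2^n-1}$ together with the complementary pairing $V_{2m}\leftrightarrow V_{2m+1}$ keeps these contributions disjoint, after which collecting coefficients in powers of $2^n$ yields the three closed forms.
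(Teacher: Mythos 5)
Your overall strategy is the same as the paper's: item (1) via $Cl_2(R)\cong K_{2^n-1}$ when $U(R)=\{1\}$, and for the remaining items a classification of unordered vertex pairs by the relation between their idempotent coordinates, with Lemma \ref{Lemma 0.1} supplying the generic distance in each class and a refinement by whether $u_ju_l=1$. The paper's sums $T_1,\dots,T_5$ (respectively $S_1,\dots,S_4$ in its Case 3) are exactly your classes (a)--(d) with the pairs involving $e=1$ split off, and your observation that distinct mutually inverse units occur only in $U''(R)$, in $|U''(R)|/2$ pairs, is the same bookkeeping device the paper uses. So there is no methodological difference.

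The gap is that you never perform the count, and here the count \emph{is} the theorem: your closing assertion that "collecting coefficients \dots yields the three closed forms" is where all the content lives, and it is not true as stated. Executing your plan for $|U''(R)|=0$, $|U'(R)|=r>1$ gives
\begin{align*}
W(Cl_2(R))&=3\binom{r}{2}+(2^n-2)r(r-1)+(2^n-2)(2r^2-r)+\frac{2^n-2}{2}\,r^2+\frac{(2^n-2)(2^n-4)}{2}(2r^2-r)\\
&=\frac{1}{2}\left[r^2(2\cdot2^{2n}-5\cdot2^n+5)-r(2^{2n}-2\cdot2^n+3)\right],
\end{align*}
which is item (3) specialized to $|U(R)|=|U'(R)|$ (as one would expect, and which is also the expression the paper's own Case 2 derivation ends with), \emph{not} the polynomial $\frac{1}{2}\left[r^2(2\cdot2^{2n}-17\cdot2^n+21)-r(2\cdot2^{2n}-8\cdot2^n+11)\right]$ displayed in item (2). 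A concrete check: for $R=\mathbb{Z}_6$ one has $n=2$, $r=2$, six vertices, and direct inspection gives $W=23$, matching the derived formula, whereas the displayed item (2) evaluates to $-41$. So your skeleton is sound and coincides with the paper's, but a complete proof must actually produce the polynomials, and for item (2) it produces a different (and correct) one; leaving the arithmetic as "collecting like terms" conceals that the target formula itself is wrong.
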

	\begin{proof} Let $R=R_1\times R_2 \times\cdot\cdot\cdot\times R_n$, where each $R_i$ is a local ring.  Let $Id(R)^*=\{e_i\in R:e_i\neq0,e_i(e_i-1)=0,e_1=1 \}$ and $U(R)=\{u_i\in R: \text{ there exists a unique }u_j \in R\text{ such that }u_iu_j=1 \}$. To find the Wiener index $W(Cl_2(R))$ of $Cl_2(R)$, we make the following cases:
		
		\noindent\textbf{Case 1.} When $|U''(R)| = 0$ and $|U'(R)| = 1$: Here, $|U'(R)| = 1$ implies $U(R) = U'(R) = \{1\}$. Consequently, $V(Cl_2(R)) = \{(e_i,1) : e_i \in Id(R)^*$ and $1 \in U'(R)\}$. Since $1^2=1$, the vertices $(e_i,1)\sim(e_j,1)$ for all $e_i, e_j\in Id(R)^*$. Therefore, $Cl_2(R)\cong K_{Id(R)-1}.$ Thus, $W(Cl_2(R))=\frac{1}{2}(2^n-2)(2^n-1).$ 
		
		\noindent\textbf{Case 2.} When $|U''(R)| = 0$ and $|U'(R)| > 1$: Here, $|U''(R)| = 0$ and $|U'(R)| > 1$ imply $U(R) = U'(R) = \{u_i \in U(R) : u_i^2 = 1\}$. The vertex set of $Cl_2(R)$ is $V(Cl_2(R))=\{(e_i, u_j):e_i\in Id(R)^*, u_j^2=1\}.$ As $e_i\in Id(R)^*$ and $e_i-1\in Id(R)^*$ are orthogonal, we partition $V(Cl_2(R))$ as $V_1\sqcup V_2\sqcup...\sqcup V_{2^n-1}$ such that $|V_i|=|U'(R)|+|U''(R)|$ for each $i$, $V_{2m}=\{(e_{2m},u_j): e_{2m}\in Id(R)^*, u_j \in U(R)\}$ and $V_{2m+1}=\{(e_{2m+1},u_k): e_{2m+1}\in Id(R)^*, u_k \in U(R)\}$ for $m=1,2,\ldots,\frac{2^n-2}{2}$; see Figure \ref{Figure 1}. Now, we compute the Wiener index $W(Cl_2(\mathbb{Z}_n))$ of $Cl_2(R)$.

		\begin{align*}
			\displaystyle W(Cl_2(R))&=\sum\limits_{\substack{e_i,e_k\in Id(R)^*\\u_j,u_l\in U'(R)}}d((e_i,u_j),(e_k,u_l))\\
			&=\sum\limits_{\substack{l=2\\j<l}}^{|U'(R)|}\sum\limits_{j=1}^{|U'(R)|-1}d((e_1,u_j),(e_1,u_l))+\sum\limits_{i=2}^{2^n-1}\sum\limits_{\substack{l=2\\j<l}}^{|U'(R)|}\sum\limits_{j=1}^{|U'(R)|-1}d((e_i,u_j),(e_i,u_l))\\
			&\quad+\sum\limits_{j=2}^{2^n-1}\sum\limits_{l=1}^{|U'(R)|}\sum\limits_{j=1}^{U'(R)}d((e_1,u_j),(e_k,u_l))+\sum_{\substack{e_ie_k=0}}\sum\limits_{l=1}^{|U'(R)|}\sum\limits_{j=1}^{|U'(R)|}d((e_i,u_j),(e_k,u_l))\\
			&\quad+\sum_{\substack{e_ie_k\neq0}}\sum\limits_{l=1}^{|U'(R)|}\sum\limits_{j=1}^{|U'(R)|}d((e_i,u_j),(e_k,u_l)).
		\end{align*}
		Let us define the following terms, 
		\begin{align*}
			&T_1=\sum\limits_{\substack{l=2\\j<l}}^{|U'(R)|}\sum\limits_{j=1}^{|U'(R)|-1}d((e_1,u_j),(e_1,u_l)),\\ &T_2=\sum\limits_{i=2}^{2^n-1}\sum\limits_{\substack{l=2\\j<l}}^{|U'(R)|}\sum\limits_{j=1}^{|U'(R)|-1}d((e_i,u_j),(e_i,u_l)),\\  &T_3=\sum\limits_{j=2}^{2^n-1}\sum\limits_{l=1}^{|U'(R)|}\sum\limits_{j=1}^{U'(R)}d((e_1,u_j),(e_k,u_l)),\\ &T_4=\sum_{\substack{e_ie_k=0}}\sum\limits_{l=1}^{|U'(R)|}\sum\limits_{j=1}^{|U'(R)|}d((e_i,u_j),(e_k,u_l)),\\  
			&\displaystyle  T_5=\sum_{\substack{e_ie_k\neq0}}\sum\limits_{l=1}^{|U'(R)|}\sum\limits_{j=1}^{|U'(R)|}d((e_i,u_j),(e_k,u_l)).
		\end{align*}
		Now, let us evaluate each summation individually
		\begin{enumerate}
			\item $ \displaystyle T_1=\sum\limits_{\substack{l=2\\j<l}}^{|U'(R)|}\sum\limits_{j=1}^{|U'(R)|-1}d((e_1,u_j),(e_1,u_l)) $
			\begin{align*}
				T_1=\sum\limits_{\substack{l=2\\j<l}}^{|U'(R)|}\{d((e_i,u_1),(e_i,u_l))+d((e_i,u_2),(e_i,u_l))+\ldots+d((e_i,u_{|U(R)|-1}),(e_i,u_l ))\}
			\end{align*}
			Using Lemma \ref{Lemma 0.1}, we have $ d((e_1,u_j),(e_1,u_l))=3$ if $e_i=e_k=1$ and $u_ju_l\neq1$, therefore
			\begin{align*}
				T_1&=3((|U'(R)|-1)+(|U'(R)|-2)+\ldots+1)\\
				&=3\left(\frac{|U'(R)|(|U'(R)|-1)}{2}\right).
			\end{align*}	
			\item $T_2=\sum\limits_{i=2}^{2^n-1}\sum\limits_{\substack{k=1\\j<k}}^{|U'(R)|-1}\sum\limits_{j=2}^{|U'(R)|}d((e_i,u_j),(e_i,u_k)):$
			\begin{align*}
				T_2=\sum\limits_{i=2}^{2^n-1}\sum\limits_{\substack{l=2\\j<l}}^{|U'(R)|}\{d((e_i,u_1),(e_i,u_l))+d((e_i,u_2),(e_i,u_l))+\ldots+d((e_i,u_{|U(R)|-1}),(e_i,u_l ))\}
			\end{align*}
			From Lemma \ref{Lemma 0.1}, we have $ d((e_i,u_j),(e_i,u_l))=3$ if $e_i=e_k\neq1$ and $u_ju_l\neq1$, therefore
			\begin{align*}
				T_2&=\sum\limits_{i=2}^{2^n-1}(2(|U'(R)|-1)+2(|U'(R)|-2)+\ldots+2)\\
				&=2(2^n-2)\frac{|U'(R)|(|U'(R)|-1)}{2}\\
				&=(2^n-2)|U'(R)|(|U'(R)|-1).
			\end{align*}
			\item $ T_3=\sum\limits_{j=2}^{2^n-1}\sum\limits_{k=1}^{|U'(R)|}\sum\limits_{i=1}^{|U'(R)|}d((e_1,u_j),(e_k,u_l)) :$
			
			From Lemma \ref{Lemma 0.1}, we have $ d((e_i,u_j),(e_k,u_l)) =\begin{cases}
				2& \text{ if } e_i\neq e_k, e_ie_k\neq 0 \text{ and } u_ju_l\neq1\\
				1 & \text{ if }  e_ie_k= 0 \text{ or } u_ju_l=1.
			\end{cases}$\\
			Since $u_j,u_l\in U'(R)$, it follows that $u_ju_l=1$ if and only if $u_j=u_l$. 
			Therefore, we split $T_3$ into two parts as follows.
			
			
			\begin{align*}
				T_3&=\sum\limits_{k=2}^{2^n-1}\sum\limits_{j=1}^{|U'(R)|}d((e_1, u_j),(e_k,u_j))+\sum\limits_{k=2}^{2^n-1}\sum\limits_{\substack{u_l\in U'(R)\\l\neq j}}\sum\limits_{u_j\in U'(R)}d((e_1, u_j),(e_k,u_l))\\
				&=\sum\limits_{k=2}^{2^n-1}|U'(R)|+\sum\limits_{k =2}^{2^n-1}2|U'(R)|(|U'(R)|-1)\\
				&=(2^n-2)|U'(R)|+(2^n-2)2|U'(R)|(|U'(R)|-1)\\
				&=(2^n-2)(2|U'(R)|^2-|U'(R)|).
			\end{align*}

			\item $ T_4=\sum\limits_{\substack{e_ie_k=0}}\sum\limits_{k=1}^{|U'(R)|}\sum\limits_{i=1}^{|U'(R)|}d((e_i,u_j),(e_k,u_l)) :$\label{T_4}
			
			From Lemma \ref{Lemma 0.1}, $d((e_i,u_j),(e_k,u_l))=1$ for all $u_j,u_l\in U'(R)$ and $e_ie_k=0$. As we have mentioned above, if $k=2m+1$ and $i=2m$, then $e_{2m}e_{2m+1}=0$ for $m=1,2,\ldots,\frac{2^n-2}{2}$.
			Note that, there are $\left(\frac{2^n-2}{2}\right)$ pair of idempotents $e_{2m},e_{2m+1}\in Id(R)^*$ such that $e_{2m}e_{2m+1}=0$. Therefore,
			
			\begin{align}
				\begin{split}
					T_4&=\sum\limits_{\substack{e_{2m},e_{2m+1}\in Id(R)^*}}\sum\limits_{\substack{u_l\in U(R)\\l=1}}^{|U(R)|}\sum\limits_{\substack{u_j\in U(R)\\j=1}}^{|U(R)|}d((e_i,u_j),(e_k,u_l))\\
					&=\left(\frac{2^n-2}{2}\right)|U'(R)|^2.
				\end{split}
			\end{align}
			
			\item $\displaystyle  T_5=\sum\limits_{\substack{e_ie_k\neq0\\e_i\neq e_k\neq 1}}\sum\limits_{l=1}^{|U'(R)|}\sum\limits_{j=1}^{|U'(R)|}d((e_i,u_j),(e_k,u_l)): $
			
			From Lemma \ref{Lemma 0.1}, we have $d((e_i,u_j),(e_k,u_l))=\begin{cases}
				2 & \text{ if } e_i\neq e_k, u_ju_l\neq 1 \text{ and } e_ie_k\neq0\\
				1 & \text{ if } e_i\neq e_k, e_ie_k=0 \text{ or } u_ju_l=1
			\end{cases}$
			
			To compute $T_5$, we divide the computation into two parts: $u_j=u_l$ and $u_j\neq u_l$ as $u_j,u_l\in U'(R)$. Using the same reasoning as in the previous cases, we have $d((e_i,u_j),(e_k,u_l))=d((e_i-1,u_j),(e_k,u_l))$ for $e_i\neq e_k\neq e_i-1$. Therefore,
			
			\begin{align*}
				T_5&=\sum\limits_{\substack{e_ie_k\neq0\\e_i\neq e_k\neq 1}}\sum\limits_{j=1}^{|U'(R)|}d((e_i,u_j),(e_k,u_j))+\sum\limits_{\substack{u_l\\u_l\neq u_j}}\sum\limits_{\substack{u_j\\u_j\neq u_l}}\sum\limits_{\substack{e_ie_k\neq0\\e_i\neq e_k\neq 1}}d((e_i,u_j),(e_k,u_l))\\
				&\quad+\sum\limits_{\substack{e_ie_k\neq0\\e_i\neq e_k\neq 1}}\sum\limits_{j=1}^{|U'(R)|}d((e_i-1,u_j),(e_k,u_j))+\sum\limits_{\substack{u_l\\u_l\neq u_j}}\sum\limits_{\substack{u_j\\u_j\neq u_l}}\sum\limits_{\substack{e_ie_k\neq0\\e_i\neq e_k\neq 1}}d((e_i-1,u_j),(e_k,u_l)).
			\end{align*} 
			Since, $d((e_i,u_j),(e_k,u_l))=d((e_i-1,u_j),(e_k,u_l)).$ Therefore,
			
			\begin{align*}
				T_5=2\sum\limits_{\substack{e_ie_k\neq0\\e_i\neq e_k\neq 1}}\sum\limits_{j=1}^{|U'(R)|}d((e_i,u_j),(e_k,u_j))+2\sum\limits_{\substack{u_l\\u_l\neq u_j}}\sum\limits_{\substack{u_j\\u_j\neq u_l}}\sum\limits_{\substack{e_ie_k\neq0\\e_i\neq e_k\neq 1}}d((e_i,u_j),(e_k,u_l)).
			\end{align*}
			
			Say, $S_1=2\sum\limits_{\substack{e_ie_k\neq0\\e_i\neq e_k\neq 1}}\sum\limits_{j=1}^{|U'(R)|}d((e_i,u_j),(e_k,u_j))$ and $S_2=2\sum\limits_{\substack{u_l\\u_l\neq u_j}}\sum\limits_{\substack{u_j\\u_j\neq u_l}}\sum\limits_{\substack{e_ie_k\neq0\\e_i\neq e_k\neq 1}}d((e_i,u_j),(e_k,u_l)).$  
			
			\begin{align*}
				S_1&= 2\sum\limits_{\substack{e_ie_k\neq0\\e_i\neq e_k\neq 1}}\sum\limits_{j=1}^{|U'(R)|}d((e_i,u_j),(e_k,u_j))\\
				&=2\sum\limits_{\substack{e_ie_k\neq0\\e_i\neq e_k\neq 1\\i<k\leq2^n-1}}\sum\limits_{\substack{i=2m+1\\m=1}}^{\frac{2^n-4}{2}}\sum\limits_{j=1}^{|U'(R)|}d((e_i,u_j),(e_k,u_j))\\
				&=2[(2^n-4)+(2^n-6)+\dots +2]|U'(R)|\\
				&=4[(2^{n-1}-2)+(2^{n-1}-3)+\dots +1]|U'(R)|\\
				&=4\left[\frac{(2^{n-1}-2)(2^{n-1}-1)}{2}\right]\left|U'(R)\right|\\
				&= \frac{(2^n-4)(2^n-2)}{2}\left|U'(R)\right|.
			\end{align*}
			and
			\begin{align*}
				S_2&=2\sum\limits_{\substack{u_l\\u_l\neq u_j}}\sum\limits_{\substack{u_j\\u_j\neq u_l}}\sum\limits_{\substack{e_ie_k\neq0\\e_i\neq e_k\neq 1}}d((e_i,u_j),(e_k,u_l))\\
				&= 2(|U'(R)|-1)|U'(R)|2\left[(2^n-4)+(2^n-6)+\dots +2\right]\\
				&= (2^n-4)(2^n-2)(|U'(R)|-1)|U'(R)|.
			\end{align*}
			
			Combining  $ S_1 $  and $ S_2 $  yields
			\begin{align}
				T_5=\frac{(2^n-4)(2^n-2)}{2}(2|U'(R)|^2-|U'(R)|).
			\end{align}

		\end{enumerate}
		
		Let us recall the simplified expressions for each term
		\begin{align*}
			T_1 &= 3 \left(\frac{|U'(R)|(|U'(R)| - 1)}{2}\right), \\
			T_2 &= (2^n - 2)|U'(R)|(|U'(R)| - 1), \\
			T_3 &= (2^n - 2)(2|U'(R)|^2 - |U'(R)|), \\
			T_4 &= \left(\frac{2^n - 2}{2}\right)|U'(R)|^2, \\
			T_5 &= \frac{(2^n-4)(2^n-2)}{2}(2|U'(R)|^2-|U'(R)|).
		\end{align*}
		
		Combining $ T_1 $, $ T_2 $, $ T_3 $, $ T_4 $, and $ T_5 $ yields
		
		\begin{align*}
			W(Cl_2(R)) &= 3 \left(\frac{|U'(R)|(|U'(R)| - 1)}{2}\right) + (2^n - 2)|U'(R)|(|U'(R)| - 1) \\
			&\quad + (2^n - 2)(2|U'(R)|^2 - |U'(R)|) + \left(\frac{2^n - 2}{2}\right)|U'(R)|^2 \\
			&\quad + \frac{(2^n-4)(2^n-2)}{2}(2|U'(R)|^2-|U'(R)|).
		\end{align*}
		
		Simplifying further, we get
		
		\begin{center}
			$ W(Cl_2(R))=\frac{1}{2}\left[|U'(R)|^2(2\cdot2^{2n}-5\cdot2^n+5)-|U'(R)|(2^{2n}-2\cdot2^n+3)\right] $
		\end{center}
		
		\noindent\textbf{Case 3.} When $|U''(R)|\neq0 \text{ and }|U'(R)|\geq1$ imply $|U''(R)|\geq2$: We have $Id(R)^*=\{e_i\in R:e_i\neq0, e_i^2=e_i \text{ and }e_{2i}e_{2i+1}=0 \text{ for } i=1,2,...,\frac{2^n-2}{2} \}$. On similar line as in previous case, we partition $V(Cl_2(R))$ as $V_1\sqcup V_2\sqcup...\sqcup V_{2^n-1}$ such that $|V_i|=|U'(R)|+|U''(R)|$ for each $i$, $V_{2m}=\{(e_{2m},u_j): e_{2m}\in Id(R)^*, u_j \in U(R)\}$ and $V_{2m+1}=\{(e_{2m+1},u_k): e_{2m+1}\in Id(R)^*, u_k \in U(R)\}$ for $m=1,2,\ldots,\frac{2^n-2}{2}$. Now, we compute the Wiener index $W(Cl_2(\mathbb{Z}_n))$ of $Cl_2(R)$ as follows:
		
		\begin{align*}
			W(Cl_2(R))&=\sum\limits_{\substack{(e_i,u_j),(e_k,u_k)\in V(Cl_2(R))}}d((e_i,u_j),(e_k,u_l))\\
			&=\sum\limits_{\substack{e_i\in Id(R)^*\\i=1}}^{2^n-1}\sum\limits_{\substack{u_j\in U(R)}}\sum\limits_{\substack{u_l\in U(R)}}d((e_i,u_j),(e_i,u_l))+\sum\limits_{\substack{e_i,e_k\in Id(R)^*\\e_i\neq e_k}}\sum\limits_{\substack{u_j\in U(R)}}\sum\limits_{\substack{u_l\in U(R)}}d((e_i,u_j),(e_k,u_l))\\
			&= \sum\limits_{\substack{u_l\in U(R)\\j<l\\l=2}}^{|U(R)|} \sum\limits_{\substack{u_j\in U(R)\\j=1}}^{|U(R)|-1}d((e_1,u_j),(e_1,u_l))+\sum\limits_{\substack{e_i\in Id(R)^*\\i=2}}^{2^n-1}\sum\limits_{\substack{u_l\in U(R)\\j<l\\l=2}}^{|U(R)|} \sum\limits_{\substack{u_j\in U(R)\\j=1}}^{|U(R)|-1}d((e_i,u_j),(e_i,u_l))\\
			&\quad+\sum\limits_{\substack{e_1,e_k\in Id(R)^*\\e_1\neq e_k}}\sum\limits_{\substack{u_j\in U(R)}}\sum\limits_{\substack{u_l\in U(R)}}d((e_1,u_j),(e_k,u_l))+\sum\limits_{\substack{e_i,e_k\in Id(R)^*\\e_i\neq e_k\neq e_1}}\sum\limits_{\substack{u_j\in U(R)}}\sum\limits_{\substack{u_l\in U(R)}}d((e_i,u_j),(e_k,u_l)).
		\end{align*}
		
		For convenience, let us say 
		
		\begin{align*}
			&S_1=\sum\limits_{\substack{u_l\in U(R)\\j<l\\l=2}}^{|U(R)|} \sum\limits_{\substack{u_j\in U(R)\\j=1}}^{|U(R)|-1}d((e_1,u_j),(e_1,u_l)),~ S_2=\sum\limits_{\substack{e_i\in Id(R)^*\\i=2}}^{2^n-1}\sum\limits_{\substack{u_l\in U(R)\\j<l\\l=2}}^{|U(R)|} \sum\limits_{\substack{u_j\in U(R)\\j=1}}^{|U(R)|-1}d((e_i,u_j),(e_i,u_l)),\\ & S_3=\sum\limits_{\substack{e_1,e_k\in Id(R)^*\\e_1\neq e_k}}\sum\limits_{\substack{u_l\in U(R)}}\sum\limits_{\substack{u_j\in U(R)}}d((e_1,u_j),(e_k,u_l)),~ S_4=\sum\limits_{\substack{e_i,e_k\in Id(R)^*\\e_i\neq e_k\neq e_1}}\sum\limits_{\substack{u_l\in U(R)}}\sum\limits_{\substack{u_j\in U(R)}}d((e_i,u_j),(e_k,u_l)).
		\end{align*}\\
		
		Now, let us compute each summation individually
		\begin{enumerate}
			\item $\displaystyle S_1= \sum\limits_{\substack{u_l\in U(R)\\j<l\\l=2}}^{|U(R)|} \sum\limits_{\substack{u_j\in U(R)\\j=1}}^{|U(R)|-1}d((e_1,u_j),(e_1,u_l))$

			As $U(R)=U'(R)\sqcup U''(R)$, $\displaystyle V_i=\{(e_i,u_j):e_i\in Id(R)^*, u_j\in U'(R)\}\sqcup\{(e_i,u_k):e_i\in Id(R)^*, u_k\in U''(R)\}$. By Lemma \ref{Lemma 0.1}, $d((e_i,u_j),(e_i,u_k))=\begin{cases}
				3 & \text{ if } e_i=1\text{ and }  u_ju_k\neq1\\
				1 & \text{ if } e_i=1\text{ and } u_ju_k=1.
			\end{cases}$
			
			To determine $ S_1 $, we first observe that $ d((e_1, u_j), (e_1, u_k)) = 1 $ for each pair of vertices $(e_1, u_j)$ and $(e_1, u_j^{-1})$ in $ V_1 $. The distance $ d((e_1, u_j), (e_1, u_k)) = 3 $ for each pair $(e_1, u_j)$ and $(e_1, u_k)$ in $ V_1 $ is first taken into account in order to compute $ S_1 $. In order to prevent double counting, we next deduct $ |U''(R)| $ since each vertex pair $(e_1, u_j)$ and $(e_1, u_j^{-1})$ was counted twice. As a result, now
			
			\begin{align*}
				S_1&=\displaystyle\sum\limits_{\substack{u_l\in U(R)\\j<l\leq |U(R)|}} \sum\limits_{\substack{u_j\in U(R)\\j=1}}^{|U(R)|-1}d((e_1,u_j),(e_1,u_l))-2\frac{|U''(R)|}{2}\\
				&=\sum\limits_{\substack{u_l\in U(R)\\j<l\leq |U(R)|}}\{d((e_i,u_1),(e_i,u_l))+d((e_i,u_2),(e_i,u_l))+\ldots+d((e_i,u_{|U(R)|-1}),(e_i,u_l ))\}-|U''(R)|\\
				&=3[(|U(R)|-1)+(|U(R)|-2)+\ldots+1]-|U''(R)|\\
				&=3\left(\frac{|U(R)|\left(|U(R)|-1\right)}{2}\right)-|U(R)|+|U'(R)|\\
				&=\frac{1}{2}\left[3|U(R)|^2-5|U(R)|+2|U'(R)|\right]
			\end{align*}
			\item $ \displaystyle S_2=\sum\limits_{\substack{e_i\in Id(R)^*\\i=2}}^{2^n-1}\sum\limits_{\substack{u_l\in U(R)\\j<l\leq|U(R)|}} \sum\limits_{\substack{u_j\in U(R)\\j=1}}^{|U(R)|-1}d((e_i,u_j),(e_i,u_l)) $
			
			By Lemma \ref{Lemma 0.1}, $d((e_i,u_j),(e_i,u_l))=\begin{cases}
				2 & \text{ if } e_i\neq1\text{ and }  u_ju_l\neq1\\
				1 & \text{ if } e_i=1\text{ and } u_ju_l=1.
			\end{cases}$
			
			To determine $S_2$, we use the same argument as in $S_1$. We initially consider the distance $d((e_i,u_j),(e_i,u_l))=2$ for each pair of vertices $(e_i,u_j)$ and $ (e_i,u_l) $, $u_ju_l\neq1$, in $V_i$. Then, we subtract $\frac{|U''(R)|}{2}$ to avoid double counting as each pair $(e_i,u_j)$ and $(e_i,u_j^{-1})$ was counted twice. As a result, now
			
			\begin{align*}
				S_2&=\sum\limits_{\substack{e_i\in Id(R)^*\\i=2}}^{2^n-1}\left[\sum\limits_{\substack{u_l\in U(R)\\j<l\\l=2}}^{|U(R)|} \sum\limits_{\substack{u_j\in U(R)\\j=1}}^{|U(R)|-1}d((e_i,u_j),(e_i,u_l))-\frac{|U''(R)|}{2}\right]\\
				&=\sum\limits_{\substack{j<l\\j=2}}^{2^n-1}\left[\sum\limits_{\substack{u_l\in U(R)\\j<l\\l=2}}^{|U(R)|}\{d((e_i,u_1),(e_i,u_l))+d((e_i,u_2),(e_i,u_l))+\ldots+d((e_i,u_{|U(R)|-1}),(e_i,u_l ))\}\right.\\
				&\left.\quad-\frac{|U''(R)|}{2}\right]\\
				&=(2^n-2)\left(2(|U(R)|-1)+2(|U(R)|-2)+\ldots+2\cdot1-\frac{|U''(R)|}{2}\right)\\
				&=(2^n-2)\left(2\left(\frac{|U(R)|\left(|U(R)|-1\right)}{2}\right)-\frac{|U(R)|-|U'(R)|}{2}\right)\\
				&=\frac{2^n-2}{2}\left(2|U(R)|^2-2|U(R)|-|U(R)|+|U'(R)|\right)\\
				&=\frac{2^n-2}{2}\left(2|U(R)|^2-3|U(R)|+|U'(R)|\right)
			\end{align*}

			\item 	$\displaystyle S_3=\sum\limits_{\substack{e_1,e_k\in Id(R)^*\\e_1\neq e_k}}\sum\limits_{\substack{u_l\in U(R)}}\sum\limits_{\substack{u_j\in U(R)}}d((e_1,u_j),(e_k,u_l)) $
			
			By Lemma \ref{Lemma 0.1}, we have $d((e_i,u_j),(e_k,u_l))=\begin{cases}
				2 & \text{ if } e_i\neq e_k, e_ie_k\neq0 \text{ and } u_ju_l\neq1\\
				1&  \text{ if }  e_ie_k=0 \text{ or } u_ju_l\neq1.
			\end{cases}$
			
			So, $S_3$ can be written as
			
			\begin{align*}
				S_3&=\sum\limits_{\substack{e_1,e_k\in Id(R)^*\\e_1\neq e_k\\k=2}}^{2^n-1}\sum\limits_{\substack{u_l\in U(R)\\u_lu_j\neq1}}\sum\limits_{\substack{u_j\in U(R)\\j=1}}^{|U(R)|}d((e_1,u_j),(e_k,u_l)) +\sum\limits_{\substack{e_1\neq e_k\\k=2}}^{2^n-1}\sum\limits_{\substack{u_j,u_l\in U(R)\\u_ju_l=1}}d((e_1,u_j),(e_k,u_l))\\
				&= 2(2^n-2)|U(R)|(|U(R)|-1)+(2^n-2)|U(R)|\\
				&=(2^n-2)|U(R)|(2|U(R)|-1)
			\end{align*}
			
			\item $\displaystyle S_4=\sum\limits_{\substack{e_i,e_k\in Id(R)^*\\e_i\neq e_k\neq e_1}}\sum\limits_{\substack{u_l\in U(R)}}\sum\limits_{\substack{u_j\in U(R)}}d((e_i,u_j),(e_k,u_l))$
			
			Given that $e_i\neq e_k\neq e_1$ and $ u_l,u_j\in U(R)$, we can divide $S_4$ into four possible parts to compute $S_4$:
			
			\begin{align*}
				S_4&= \sum\limits_{\substack{e_i,e_k\in Id(R)^*\\e_ie_k=0}}\sum\limits_{\substack{u_l\in U(R)}}\sum\limits_{\substack{u_j\in U(R)}}d((e_i,u_j),(e_k,u_l))+\sum\limits_{\substack{e_i,e_k\in Id(R)^*\\e_ie_k\neq0}}\sum\limits_{\substack{u_j,u_l\in U(R)\\u_ju_l=1}}d((e_i,u_j),(e_k,u_l))\\
				&\quad+ \sum\limits_{\substack{e_i,e_k\in Id(R)^*\\e_ie_k\neq0}}\sum\limits_{\substack{u_l\in U(R)\\u_ju_l\neq1}}\sum\limits_{\substack{u_j\in U(R)}}d((e_i,u_j),(e_k,u_l))
			\end{align*}
			
			Let us say 
			\begin{align*}
				T_1&=\sum\limits_{\substack{e_i,e_k\in Id(R)^*\\e_ie_k=0}}\sum\limits_{\substack{u_l\in U(R)}}\sum\limits_{\substack{u_j\in U(R)}}d((e_i,u_j),(e_k,u_l))\\
				T_2&= \sum\limits_{\substack{e_i,e_k\in Id(R)^*\\e_ie_k\neq0}}\sum\limits_{\substack{u_j,u_l\in U(R)\\u_ju_l=1}}d((e_i,u_j),(e_k,u_l))\\
				T_3&=\sum\limits_{\substack{e_i,e_k\in Id(R)^*\\e_ie_k\neq0}}\sum\limits_{\substack{u_l\in U(R)\\u_ju_l\neq1}}\sum\limits_{\substack{u_j\in U(R)}}d((e_i,u_j),(e_k,u_l))
			\end{align*}
			We calculate each summation separately,
			
			\begin{enumerate}
				\item $\displaystyle T_1=\sum\limits_{\substack{e_i,e_k\in Id(R)^*\\e_ie_k=0}}\sum\limits_{\substack{u_l\in U(R)}}\sum\limits_{\substack{u_j\in U(R)}}d((e_i,u_j),(e_k,u_l))$
				
				With the same reasoning as in the previous case in $T_4$, 
				
				\begin{align*}
					T_1&=\sum\limits_{\substack{e_i,e_k\in Id(R)^*\\e_ie_k=0}}\sum\limits_{\substack{u_l\in U(R)}}\sum\limits_{\substack{u_j\in U(R)}}d((e_i,u_j),(e_k,u_l))\\
					&=\left(\frac{2^n-2}{2}\right)|U(R)|^2
				\end{align*}
				

				\item 	$T_2=\displaystyle \sum\limits_{\substack{e_i,e_k\in Id(R)^*\\e_ie_k\neq0}}\sum\limits_{\substack{u_j,u_l\in U(R)\\u_ju_l=1}}d((e_i,u_j),(e_k,u_l))$
				
				Note that, for each pair of vertices $(e_i,u_j)\in V_i$ and $(e_i-1,u_j)\in V_{i+1}$ for each $i=2m$ and $m=1,2,\cdots,\frac{2^n-4}{2}$ there exists $(e_k,u_l)=(e_k,u_j^{-1})\in V_k$, $k>2m+1$, such that $d((e_i,u_j),(e_k,u_l))=d((e_i-1,u_j),(e_k,u_l))=1$. Therefore, $T_2$ can be written as 
				
				\begin{align*}
					T_2&=2\sum\limits_{\substack{e_k\in Id(R)^*\\k>2m+1}}\sum\limits_{\substack{e_i\in Id(R)^*\\i=2m\\m=1}}^{\frac{2^n-4}{2}}\sum\limits_{\substack{u_j,u_j^{-1}\in U(R)}}d((e_{2m},u_j),(e_k,u_j^{-1}))\\
					&=2((2^n-4)+(2^n-6)+\cdots+2)|U(R)|\\
					&=4\left(\left(\frac{2^n-4}{2}\right)+\left(\frac{2^n-6}{2}\right)+\cdots+1\right)|U(R)|\\
					&=4\left(\frac{\left(\frac{2^n-4}{2}\right)\left(\frac{2^n-4}{2}+1\right)}{2}\right)|U(R)|\\
					&=\frac{1}{2}(2^n-4)(2^n-2)|U(R)|.
				\end{align*}			
				
				\item $\displaystyle T_3=\sum\limits_{\substack{e_i,e_k\in Id(R)^*\\e_ie_k\neq0}}\sum\limits_{\substack{u_l\in U(R)\\u_ju_l\neq1}}\sum\limits_{\substack{u_j\in U(R)}}d((e_i,u_j),(e_k,u_l))$
				
				By Lemma \ref{Lemma 0.1}, we have $d((e_i,u_j,(e_k,u_l))=2$ if $e_i\neq e
				_k\neq e_1$, $e_i e_k\neq0$ and $u_ju_l\neq1$. Since $d((e_i,u_j,(e_k,u_l))=d((e_i-1,u_j,(e_k,u_l))=2$ as $e_i(e_i-1)=0$. So, using the same argument as in the previous case in $S_2$, we compute $T_3$.
				
				\begin{align*}
					T_3&=2\sum\limits_{\substack{e_k\in Id(R)^*\\2m+1<k\leq 2^n-1}}\sum\limits_{\substack{e_i\in Id(R)*\\i=2m\\m=1}}^{\frac{2^n-4}{2}}\sum\limits_{\substack{u_l\in U(R)\\u_ju_l\neq 1}}\sum\limits_{\substack{u_j\in U(R)\\j=1}}^{|U(R)|}d((e_i,u_j,(e_k,u_l))\\
					&= 2(|U(R)|-1)(|U(R)|)(2(2^n-4)+2(2^n-6)+\cdots+2.2)\\
					&=8(|U(R)|-1)(|U(R)|)\left(\frac{2^n-4}{2}+\frac{2^n-6}{2}+\cdots+1\right)\\
					&=8(|U(R)|-1)(|U(R)|)\left(\frac{\left(\frac{2^n-4}{2}\right)\left(\frac{2^n-4}{2}+1\right)}{2}\right)\\
					&=(|U(R)|-1)(|U(R)|)(2^n-4)(2^n-2).
				\end{align*}
			\end{enumerate}		
			Combining $T_1, T_2$, and $T_3$ produces $S_4$ as
			\begin{align*}
				S_4&=\frac{1}{2} \left(2^n-2\right)|U(R)|^2+\frac{1}{2}(2^n-4)(2^n-2)|U(R)|+(|U(R)|-1)(|U(R)|)(2^n-4)(2^n-2).
			\end{align*}
			Simplifying further yields
			
			\begin{align*}
				S_4=\frac{1}{2}[|U(R)|^2(2\cdot2^{2n}-11\cdot2^n+14)-|U(R)|(2^{2n}-6\cdot2^n+8)].
			\end{align*}
		\end{enumerate}	
		
		Finally, to find the Wiener index $W(Cl_2(R))$ of clean graph $Cl_2(R)$ of ring $R$, we add $S_1$, $S_2$, $S_3$ and $S_4$. Thus
		
		\begin{align*}
			W(Cl_2(R))&=\frac{1}{2}\left[3|U(R)|^2-5|U(R)|+2|U'(R)|\right]+\frac{1}{2}(2^n-2)\left(2|U(R)|^2-3|U(R)|+|U'(R)|\right)\\
			&\quad+(2^n-2)|U(R)|(2|U(R)|-1)+\frac{1}{2}[|U(R)|^2(2\cdot2^{2n}-11.2^n+14)-|U(R)|(2^{2n}-6.2^n+8)].
		\end{align*}  
		
		Simplifying further, we obtain 
		
		\begin{align*}
			W(Cl_2(R))=\frac{1}{2}\left[|U(R)|^2(2\cdot2^{2n}-5\cdot2^n+5)-|U(R)|(2^{2n}-2^n+3)+|U'(R)|2^n\right].
		\end{align*}
	\end{proof}
	
	Now, we determine the Wiener index of ring $\mathbb Z_n$, the ring of integers modulo $n$. We need the following proposition to find the Wiener index of $\mathbb Z_n$.
	\begin{proposition}\label{Prop 3} For any non negative integer $m$, $n=2^m\prod\limits_{i=1}^{h}{p_i}^{\alpha_i}$, $p_i\neq2$ and $p_i$'s are distinct primes dividing $n$, in ring $\mathbb{Z}_n$,
		\begin{center}
			$|U'(\mathbb{Z}_n)|=\begin{cases}
				2^h & \text{ if }m=0 \text{ or }m=1 \\
				2^{h+1} & \text{ if } m=2\\
				2^{h+2} & \text{ if } m\geq 3.
			\end{cases}$
		\end{center}
		Note that, for our convenience, we write $|U'(\mathbb{Z}_n)|=r$, and the value of $r$ changes accordingly.
		
	\end{proposition}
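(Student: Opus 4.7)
The plan is to reduce the count of involutions in $U(\mathbb{Z}_n)$ to a product over the prime-power factors via the Chinese Remainder Theorem, and then to compute the number of square roots of $1$ in each unit group $U(\mathbb{Z}_{p^\alpha})$ case by case. The key observation is that if $n = 2^m \prod_{i=1}^h p_i^{\alpha_i}$, then the ring isomorphism
\begin{equation*}
\mathbb{Z}_n \;\cong\; \mathbb{Z}_{2^m} \times \mathbb{Z}_{p_1^{\alpha_1}} \times \cdots \times \mathbb{Z}_{p_h^{\alpha_h}}
\end{equation*}
induces a group isomorphism on units, and under this isomorphism an element squares to $1$ if and only if each of its components does. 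Consequently $|U'(\mathbb{Z}_n)| = |U'(\mathbb{Z}_{2^m})| \cdot \prod_{i=1}^h |U'(\mathbb{Z}_{p_i^{\alpha_i}})|$, so it suffices to evaluate each factor.

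For the odd prime powers, I would invoke the classical fact that $U(\mathbb{Z}_{p^\alpha})$ is cyclic of order $\phi(p^\alpha) = p^{\alpha-1}(p-1)$, which is even since $p$ is odd. In any cyclic group of even order there are exactly two elements of order dividing $2$, namely $\pm 1$, so $|U'(\mathbb{Z}_{p_i^{\alpha_i}})| = 2$ for every $i$. This already contributes the factor $2^h$ appearing in all three cases of the statement.

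For the power-of-$2$ factor I would split into the three regimes. When $m = 0$ the ring is trivial and when $m = 1$ we have $U(\mathbb{Z}_2) = \{1\}$, so in both cases the only involution is $1$ and $|U'(\mathbb{Z}_{2^m})| = 1$, giving $2^h$ total. When $m = 2$, a direct check shows $U(\mathbb{Z}_4) = \{1,3\}$ with $3^2 = 9 \equiv 1 \pmod 4$, so both units are involutions and the factor is $2$, yielding $2^{h+1}$. The interesting case is $m \geq 3$, where $U(\mathbb{Z}_{2^m})$ is no longer cyclic; here I would cite the standard structure theorem $U(\mathbb{Z}_{2^m}) \cong \mathbb{Z}_2 \times \mathbb{Z}_{2^{m-2}}$, so the $2$-torsion subgroup has order $2 \cdot 2 = 4$ (explicitly the square roots of $1$ modulo $2^m$ are $\pm 1$ and $\pm(1 + 2^{m-1})$). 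This contributes a factor of $4$ and gives $2^{h+2}$.

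The only genuinely delicate point is the $m \geq 3$ case, since one has to recognize that $U(\mathbb{Z}_{2^m})$ fails to be cyclic and instead has a $\mathbb{Z}_2 \times \mathbb{Z}_{2^{m-2}}$ structure; everything else is a routine CRT bookkeeping argument. Putting the three subcases together and multiplying by the contribution $2^h$ from the odd prime powers yields the piecewise formula exactly as stated.
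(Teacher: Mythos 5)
Your argument is correct and follows essentially the same route as the paper: reduce via the Chinese Remainder Theorem to counting square roots of $1$ modulo each prime-power factor, obtain exactly two such roots for each odd prime power, and handle $2^m$ by the three cases $m\le 1$, $m=2$, $m\ge 3$ (the paper cites the classical references for the four roots $\{1,\,2^{m-1}\pm 1,\,2^m-1\}$ when $m\ge 3$, where you instead invoke the structure $U(\mathbb{Z}_{2^m})\cong \mathbb{Z}_2\times\mathbb{Z}_{2^{m-2}}$, an equivalent justification). No gaps.
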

	\begin{proof}
		
		To find the number of self-invertible elements \( |U'(\mathbb{Z}_n)| \) in \( \mathbb{Z}_n \), we determine the solution for \( x^2 \equiv 1 \pmod{n} \), where \( n = 2^m \prod_{i=1}^{h}{p_i}^{\alpha_i} \), \( p_i \neq 2 \). The equation \( x^2 \equiv 1 \pmod{n} \) implies \( x^2 \equiv 1 \pmod{2^m} \) and \( x^2 \equiv 1 \pmod{\prod_{i=1}^{h}{p_i}^{\alpha_i}} \).                
		
		The solutions of $x^2\equiv1(mod~2^m)$ are as follows:
		\begin{enumerate}
			\item  $x\equiv1(mod~2)$, for $m=1$.
			\item $x\equiv1(mod~4)$ and $x\equiv3(mod~4)$, for $m=2$.
			
			\item 	By \cite[Theorem 4.20]{Jacobson}, 
			and \cite[Article 90 - 91]{Gauss} 
			$x^2\equiv1(mod~2^m)$, for $m\geq3$, has four distinct solutions $\{1, 2^{m-1}\pm 1, 2^m-1\}$ in multiplicative group $U_{2^m}$ of units of $\mathbb{Z}_{2^m}$.
		\end{enumerate}
		The solutions of $x^2\equiv1(mod ~{p_i}^{\alpha_i})$ are $x\equiv1(mod ~{p_i}^{\alpha_i})$ and $x\equiv ({p_i}^{\alpha_i}-1)(mod ~{p_i}^{\alpha_i})$.
		Now, by the Chinese remainder theorem,
		
		\begin{center}
			$|U'(\mathbb{Z}_n)|=\begin{cases}
				2^h & \text{ if }m=0 \text{ or }m=1 \\
				2^{h+1} & m=2\\
				2^{h+2} & m\geq 3.
			\end{cases}$
		\end{center}
		
	\end{proof}	
	
	\begin{corollary}
		Let $n=\prod_{i=1}^{k}{p_i}^{\alpha_i}$, $\alpha_i \in \mathbb{N}$ for $i=1,2,...,k$ and $p_i$'s be distinct primes. Then the Wiener index $W(Cl_2(\mathbb{Z}_n))$ of the clean graph $Cl_2(\mathbb{Z}_n)$ of ring of integers $\mathbb{Z}_n$ modulo $n$ is defined by
		\begin{align*}
			\displaystyle W(Cl_2(\mathbb{Z}_n))=\begin{cases}
				\infty & \text{ if } k=1,\\
				\frac{1}{2}\left[\phi(n)^2(2\cdot2^{2k}-5\cdot2^k+5)-\phi(n)(2^{2k}-2^k+3)+2^kr\right] & \text{ if } k\geq2 \text{ and }r\neq 0.
			\end{cases}
		\end{align*}
	\end{corollary}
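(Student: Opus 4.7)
The plan is to specialize the preceding theorem to $R=\mathbb{Z}_n$, with $k=1$ handled as a degenerate boundary case. First I would dispatch $k=1$ by observing that $n=p^{\alpha}$ makes $\mathbb{Z}_n$ a local ring; by Proposition~\ref{Prop 1} it then has only the trivial idempotents $0$ and $1$, so by Theorem~\ref{T1} the graph $Cl_2(\mathbb{Z}_n)$ is disconnected. Since two vertices in different components have distance $\infty$, the Wiener index is $\infty$, as claimed.

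For $k\geq 2$, the strategy is to invoke the Chinese Remainder Theorem to write $\mathbb{Z}_n\cong\mathbb{Z}_{p_1^{\alpha_1}}\times\cdots\times\mathbb{Z}_{p_k^{\alpha_k}}$ as a product of $k$ local rings. This is exactly the decomposition assumed in the theorem, with the theorem's parameter $n$ replaced by $k$. I would then make the substitutions $|U(R)|\mapsto\phi(n)$ (since $|U(\mathbb{Z}_n)|=\phi(n)$) and $|U'(R)|\mapsto r$ (using Proposition~\ref{Prop 3} for the count of self-invertible units), and plug these directly into the Case~3 formula of the theorem. No fresh distance computation is needed; the verification reduces to a symbolic substitution into the already-established formula.

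The step I expect to be the main obstacle is checking that the correct case of the theorem actually applies. The condition ``$r\neq 0$'' in the corollary is automatically true, since $1\in U'(\mathbb{Z}_n)$, so by itself it does not force $|U''(\mathbb{Z}_n)|\neq 0$; a priori the ring might land in Case~2 instead of Case~3. I would therefore need either to show that for every $k\geq 2$ under consideration $|U''(\mathbb{Z}_n)|\neq 0$, or to check that the Case~3 expression agrees with the Case~1/Case~2 expression at the exceptional values where $\phi(n)=r$. Using Proposition~\ref{Prop 3} one can enumerate the few small $n$ with $k\geq 2$ and $\phi(n)=r$ (so $|U''(\mathbb{Z}_n)|=0$) and verify the formulas coincide there by direct evaluation. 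Once this compatibility is established, the corollary follows from the substitution described above.
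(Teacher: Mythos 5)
Your proposal matches the paper's (implicit) argument exactly: the paper offers no separate proof of the corollary, which is obtained by decomposing $\mathbb{Z}_n$ via the Chinese Remainder Theorem into $k$ local rings and substituting $|U(\mathbb{Z}_n)|=\phi(n)$ and $|U'(\mathbb{Z}_n)|=r$ into the theorem. The subtlety you flag is real and handled more carefully than in the paper --- for $n\in\{6,12,24\}$ one has $k\geq 2$ but $|U''(\mathbb{Z}_n)|=0$, and one checks that the Case~3 expression with $|U(R)|=|U'(R)|$ collapses to the Case~2 expression derived in the theorem's proof (the term $-|U|(2^{2n}-2^n+3)+|U|2^n$ equals $-|U|(2^{2n}-2\cdot 2^n+3)$), so the stated formula remains valid there.
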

	\begin{Illustration}
		Figure \ref{Figure 2} illustrates the above result. In this graph, we have partitioned the vertex set $ V(Cl_2(\mathbb{Z}_{pq})) $ into three disjoint subsets as $\displaystyle V_1= \{(1,u):1\in Id(\mathbb{Z}_{pq}), u\in U(\mathbb{Z}_{pq})\}$, $ V_2= \{(e,u):e\in Id(\mathbb{Z}_{pq}), u\in U(\mathbb{Z}_{pq})\}$ and $ V_3= \{(1-e,u):1-e\in Id(\mathbb{Z}_{pq}), u\in U(\mathbb{Z}_{pq})\}$.   
	\end{Illustration}
	
	\begin{example}
		In the ring $\mathbb{Z}_{15}$, the set of units $U(\mathbb{Z}_{15})$ consists of the elements $\{1, 2, 4, 7, 8, 11, 13, 14\}$, and the set of idempotents $Id(\mathbb{Z}_{15})^*$ is $\{1, 6, 10\}$. Also, the set $V(Cl_2(\mathbb{Z}_{15}))$ consists of pairs like $(1, 1)$, $(1, 2)$, ..., $(1, 14)$, $(6, 1)$, $(6, 2)$, ..., $(6, 14)$, ..., and $(10, 14)$. Here $ n=15 $, $ \phi(15)= 8 $, $ k=2 $ and $r=4$. So that, $ W(Cl_2(\mathbb{Z}_{15}))=\frac{1}{2}[8^2(2(2^{4})-5(2^2)+5)-8(2^4-2^2+3)+2^22^2] =332$.
	\end{example}		
	
	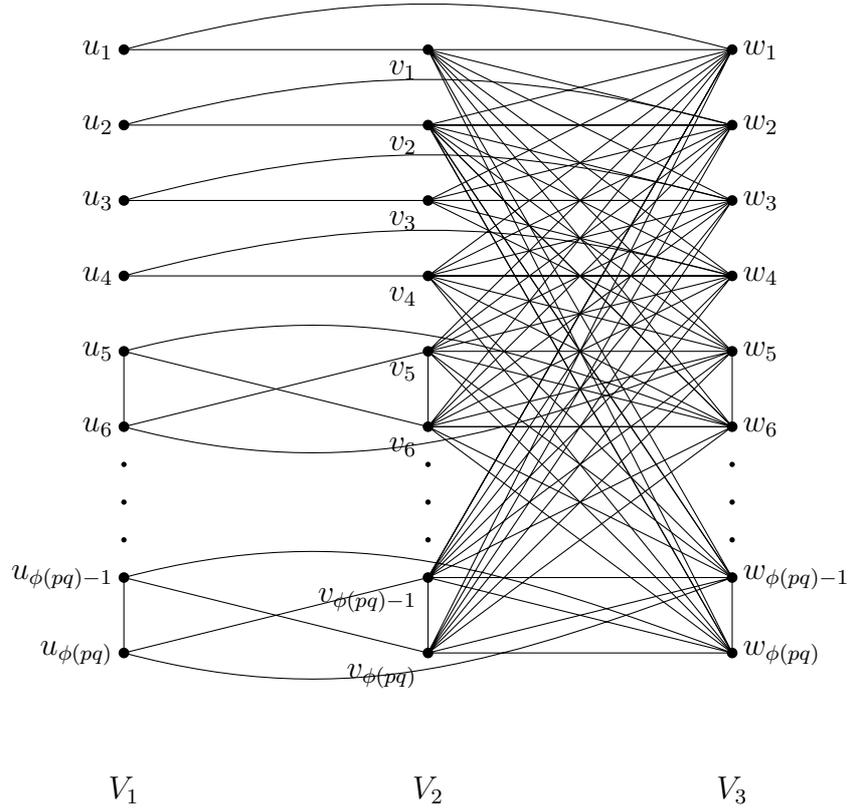
\begin{figure}[h!]
		\begin{center}
			
			{	\begin{tikzpicture}
					\draw (0,-6.5) node[below]{$V_3$};
					\fill (0,3) circle (2pt) node[right] {$w_1$}; 
					\fill (0,2) circle (2pt) node[right] {$w_2$}; 
					\fill (0,1) circle (2pt) node[right] {$w_3$}; 
					\fill (0,0) circle (2pt) node[right] {$w_4$};
					\fill (0,-1) circle (2pt) node[right] {$w_5$};
					\fill (0,-2) circle (2pt) node[right] {$w_6$};
					\fill (0,-2.5) circle (1pt); 
					\fill (0,-3) circle (1pt);
					\fill (0,-3.5) circle (1pt);
					\fill (0,-4) circle (2pt) node[right] {$w_{\phi(pq)-1}$};
					\fill (0,-5) circle (2pt) node[right] {$w_{\phi(pq)}$};
					
					\draw (-4,-6.5) node[below]{$V_2$};
					\fill (-4,3) circle (2pt) node[below left] {$v_1$}; 
					\fill (-4,2) circle (2pt) node[below left] {$v_2$}; 
					\fill (-4,1) circle (2pt) node[below left] {$v_3$}; 
					\fill (-4,0) circle (2pt) node[below left] {$v_4$};
					\fill (-4,-1) circle (2pt) node[below left] {$v_5$};
					\fill (-4,-2) circle (2pt) node[below left] {$v_6$};
					\fill (-4,-2.5) circle (1pt); 
					\fill (-4,-3) circle (1pt);
					\fill (-4,-3.5) circle (1pt);
					\fill (-4,-4) circle (2pt) node[below left] {$v_{\phi(pq)-1}$};
					\fill (-4,-5) circle (2pt) node[below left] {$v_{\phi(pq)}$};
					
					\draw (-8,-6.5) node[below]{$V_1$};
					\fill (-8,3) circle (2pt) node[left] {$u_1$}; 
					\fill (-8,2) circle (2pt) node[left] {$u_2$}; 
					\fill (-8,1) circle (2pt) node[left] {$u_3$}; 
					\fill (-8,0) circle (2pt) node[left] {$u_4$};
					\fill (-8,-1) circle (2pt) node[left] {$u_5$};
					\fill (-8,-2) circle (2pt) node[left] {$u_6$};
					\fill (-8,-2.5) circle (1pt); 
					\fill (-8,-3) circle (1pt);
					\fill (-8,-3.5) circle (1pt);
					\fill (-8,-4) circle (2pt) node[left] {$u_{\phi(pq)-1}$};
					\fill (-8,-5) circle (2pt) node[left] {$u_{\phi(pq)}$};
					
					\draw [black] (-8,3) to (-4,3) to (0,3) (-8,3) to[out=15,in=165] (0,3) (-4,3) to (0,2) (-4,3) to (0,1) (-4,3) to (0,0) (-4,3) to (0,-1) (-4,3) to (0,-2) (-4,3) to (0,-4) (-4,3) to (0,-5) ;
					\draw [black] (-8,2) to (-4,2) to (0,2) (-8,2) to[out=15,in=165] (0,2) (-4,2) to (0,1) (-4,2) to (0,2)(-4,2) to (0,3) (-4,2) to (0,0) (-4,2) to (0,-1) (-4,2) to (0,-2) (-4,2) to (0,-4) (-4,2) to (0,-5) ;
					\draw [black] (-8,1) to (-4,1) to (0,1) (-8,1) to[out=15,in=165] (0,1) (-4,1) to (0,1) (-4,1) to (0,2)(-4,1) to (0,3) (-4,1) to (0,0) (-4,1) to (0,-1) (-4,2) to (0,-2) (-4,2) to (0,-4) (-4,2) to (0,-5) ;
					\draw [black] (-8,0) to (-4,0) to (0,0) (-8,0) to[out=15,in=165] (0,0) (-4,0) to (0,0) (-4,0) to (0,2)(-4,0) to (0,3) (-4,0) to (0,1) (-4,0) to (0,-1) (-4,0)to (0,-2) (-4,0) to (0,-4) (-4,0) to (0,-5) ;
					\draw [black] (-8,-2) to (-8,-1)to (-4,-2) to (0,-2) (-8,-1) to[out=15,in=160] (0,-2) (-4,-1)to (-4,-2) (0,-1)to(0,-2) (-4,-1) to (0,0) (-4,-1) to (0,2)(-4,-1) to (0,3) (-4,-1) to (0,1) (-4,-1) to (0,-1) (-4,-1) to (0,-2) (-4,-1) to (0,-4) (-4,-1) to (0,-5);
					\draw[black] (-8,-2) to (-4,-1) (-8,-2) to [out=-15,in=-160](0,-1) (-4,-2)to(0,3) (-4,-2)to(0,2) (-4,-2) to (0,1) (-4,-2) to (0,0) (-4,-2) to (0,-1) (-4,-2) to (0,-2) (-4,-2) to (0,-4) (-4,-2) to (0,-5);
					\draw[black] (-8,-5) to (-8,-4)to(-4,-5)to(-4,-4)to(0,3) (-8,-4) to[out=15,in=160] (0,-5) (-8,-5)to[out=-15,in=-160](0,-4)to(0,-5)(-8,-5)to(-4,-4)to(0,3)(-4,-4)to(0,2)(-4,-4)to(0,1)(-4,-4)to(0,0)(-4,-4)to(0,-1)(-4,-4)to(0,-2)(-4,-4)to(0,-4)(-4,-4)to(0,-5) (-4,-5)to(0,3)(-4,-5)to(0,2)(-4,-5)to(0,1)(-4,-5)to(0,0)(-4,-5)to(0,-1)(-4,-5)to(0,-2)(-4,-5)to(0,-4)(-4,-5)to(0,-5);
			\end{tikzpicture}}
		\end{center}
		\caption{$Cl_2{(\mathbb{Z}_{pq})}$}\label{Figure 2}
	\end{figure}

\section{Matchings in $Cl_2(R)$}\label{Sec4}

In this section, we show $Cl_2(R)$ has a perfect matching. Also, we determine the matching number if it does not contain a perfect matching.
\begin{theorem}
	Let $R$ be any commutative Artinian ring with $|U(R)|=2k$ for some positive integer $k$. Then, $Cl_2(R)$ contains a perfect matching. Moreover, if $|U(R)|$ is not even then the matching number is $\left(\frac{|U(R)|(2^n-1)-1}{2}\right)$.
\end{theorem}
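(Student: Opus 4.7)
The plan is to give an explicit construction of the required matching in each parity case. I will use the product decomposition $R\cong R_1\times\cdots\times R_n$ together with the block partition $V(Cl_2(R))=V_1\sqcup V_2\sqcup\cdots\sqcup V_{2^n-1}$ from Remark~\ref{remark 1}, implicitly assuming $n\geq 2$ so that $Cl_2(R)$ is connected by Theorem~\ref{T1}. Three types of edges will drive the construction: (i) $(e_i,u)\sim(e_i,u^{-1})$ for $u\in U''(R)$ (unit and its inverse inside one block); (ii) $(e_{2m},u)\sim(e_{2m+1},v)$ for arbitrary $u,v\in U(R)$, valid because $e_{2m}e_{2m+1}=0$; and (iii) $(1,w)\sim(e_i,w)$ for $w\in U'(R)$ and $i\geq 2$, valid because $w^2=1$. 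Note that $|U''(R)|$ is always even since $U''(R)$ is the union of $\{u,u^{-1}\}$-pairs, so $|U(R)|$ and $|U'(R)|$ have the same parity.

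For the \textbf{even case} $|U(R)|=2k$, I would use type~(ii) edges to match $(e_{2m},u)\sim(e_{2m+1},u)$ for every $u\in U(R)$ and every $m\geq 2$, which saturates $V_4\cup V_5\cup\cdots\cup V_{2^n-1}$. Inside $V_1\cup V_2\cup V_3$, handle $U''(R)$-entries by $(1,u)\sim(1,u^{-1})$ in $V_1$ and $(e_2,u)\sim(e_3,u)$ between $V_2$ and $V_3$. For $U'(R)$ (of even size by the parity observation), enumerate $U'(R)=\{w_1,\ldots,w_{|U'(R)|}\}$, and for each pair $\{w_{2i-1},w_{2i}\}$ include the three edges
\[
(1,w_{2i-1})\sim(e_2,w_{2i-1}),\quad (1,w_{2i})\sim(e_3,w_{2i}),\quad (e_3,w_{2i-1})\sim(e_2,w_{2i}),
\]
the first two of type~(iii) and the third of type~(ii). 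A direct check shows each vertex is saturated exactly once, giving a perfect matching.

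For the \textbf{odd case}, $|U'(R)|$ is odd and at least $1$ since $1\in U'(R)$; fix any $w_0\in U'(R)$ and pair up the remaining $|U'(R)|-1$ elements. Apply the even-case construction to the paired elements, and for the singleton $w_0$ include only the edge $(e_2,w_0)\sim(e_3,w_0)$, leaving $(1,w_0)$ as the unique unsaturated vertex. This yields a matching of cardinality $\tfrac{1}{2}\bigl((2^n-1)|U(R)|-1\bigr)$, and this is best possible because $|V(Cl_2(R))|=(2^n-1)|U(R)|$ is a product of two odd numbers, hence odd, forcing $\mu(Cl_2(R))\leq \lfloor |V|/2 \rfloor$.

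I expect the main obstacle to be the coordination inside the triple block $V_1\cup V_2\cup V_3$. The vertices $(1,w)$ with $w\in U'(R)$ are isolated from the rest of $V_1$, so saturating them forces borrowing vertices from $V_2$ and $V_3$ via type~(iii) edges; this in turn leaves unmatched residues in $V_2,V_3$, and the three-edge gadget above is exactly what binds two such stranded $V_1$-vertices and their $V_2,V_3$ residues together using a single type~(ii) edge.
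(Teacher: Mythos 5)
Your construction is correct and establishes the same conclusion, but it assembles the matching differently from the paper. The paper works column-by-column: it pairs up the units, and for each pair $u_i,u_j\in U'(R)$ it exhibits a perfect matching $M_{ij}$ on the two full columns $\{(e,u_i)\}\cup\{(e,u_j)\}$ spanning all $2^n-1$ idempotent blocks (a chain through the orthogonal pairs $e_{2m}e_{2m+1}=0$, closed off by the edges $(e_1,u_i)(e_{2^n-1},u_i)$ and $(e_{2^n-2},u_i)(e_{2^n-1},u_j)$), while inverse pairs $u,u^{-1}\in U''(R)$ are matched within each block; the odd case is handled by matching the leftover self-inverse unit's column internally, leaving $(e_{2^n-1},u)$ unsaturated. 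You instead work row-by-row: the blocks $V_4,\dots,V_{2^n-1}$ are saturated uniformly by $(e_{2m},u)(e_{2m+1},u)$ with no reference to the unit structure, and all the unit-dependent work is concentrated in the three-edge gadget on $V_1\cup V_2\cup V_3$. Your version localizes the case analysis and makes explicit two points the paper uses only implicitly: the parity observation that $|U''(R)|$ is even, hence $|U'(R)|\equiv|U(R)|\pmod 2$ (needed to know the self-inverse units can be paired up), and the optimality of the odd-case matching (since $(2^n-1)|U(R)|$ is odd, $\mu(Cl_2(R))\le\lfloor |V|/2\rfloor$, which your matching attains); the paper asserts the matching number without this upper-bound argument. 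Both proofs share the same unstated hypothesis $n\ge 2$: for $n=1$ the vertex $(1,w)$ with $w\in U'(R)$ is isolated and the statement fails, so your explicit flagging of that assumption is a point in your favor rather than a gap.
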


\begin{proof} Given that $R$ is a commutative Artinian ring. By \cite[Theorem 8.7]{Atiyah}, $R\cong R_1\times R_2\times\cdots\times R_n$, where each $R_i$ is a local ring. Let us partition the vertex set $V(Cl_2(R))$ as $V_1\sqcup V_2\sqcup...\sqcup V_{2^n-1},$ where $V_{i}=\{v_{ji}=(e_i,u_j): e_i\in Id(R)^*, u_j\in U(R)\}$ as shown in the Figure \ref{Figure 1}. For convenience, we write $|U'(R)|=r$.

	By the definition of the clean graph, we have the following adjacency:
	\begin{enumerate}
		\item $(e_i,u_j)\sim (e_k,u_j)$ if $u_j \in U'(R)$ or $e_ie_k=0.$
		\item  $(e_i,u_j)\sim (e_i,u_l)$ if $u_j,u_l\in U''(R)$ and $u_ju_l=1$.
		\item $(e_i,u_j)\sim (e_k,u_l) $ if $u_j,u_l\in U''(R)$ and $u_ju_l=1$ or $e_i e_k= 0$ .
		
	\end{enumerate}

	For every pair of units $u_i$ and $u_j$ in $U(R)$, there is a subgraph that contains a perfect matching. As for units $u_i,u_j\in U'(R)$ where $i\neq j$, one of the matching in corresponding subgraph is given by 
	\begin{align*}
		M_{ij}&=\{(e_{2k},u_i)(e_{2k+1},u_i):u_i\in U'(R),e_{2k},e_{2k+1}\in Id(R)^*, k=1,2,\cdots,\frac{(2^n-4)}{2}\}\\
		&\quad\cup\{(e_{2k-1},u_j)(e_{2k},u_j):u_j\in U'(R),e_{2k-1},e_{2k}\in Id(R)^*, k=1,2,\cdots,\frac{(2^n-2)}{2}\}\\
		&\quad\cup \{(e_1,u_i)(e_{2^n-1},u_i)\}\cup \{(e_{2^n-2},u_i)(e_{2^n-1},u_j)\}.
	\end{align*}
	The matching number of the subgraph corresponding to $u_i$ and $u_j$ is :
	
	\begin{align*}
		|M_{ij}|&= \frac{2^n-4}{2}+\frac{2^n-2}{2}+2\\
		&= 2^n-1.
	\end{align*}
	
	Since the matching number $M_{ij}$ in the corresponding subgraph is half of the number of the vertices in the subgraph. Therefore, $M_{ij}$ is a perfect matching in the corresponding subgraph. Since $|U'(R)|$ is even, there exist $\frac{|U'(R)|}{2}$ vertex disjoint subgraphs. Let us define $ M_1 = \{M_{ij} : i \neq j\} $. As we are considering disjoint pairs of units $ u_i$ and $u_j \in U'(R)$, any $M_{ij} $ and $M_{kl}$ in  $M_1$ are vertex-disjoint. Similarly, for $u_p,u_q\in U''(R)$ where $u_q=u_p^{-1}$, one of the perfect matching in corresponding subgraph is $M_{pq}=\{(e_k,u_p)(e_k,u_q): u_p,u_q\in U''(R), u_pu_q=1, \text{ and } e_k\in Id(R)^*, k=1,2,\cdots , 2^n-1\}$. Let us define $M_2=\{M_{pq}:p\neq q\}$. Since for each $u_p\in U''(R)$ there exists unique $u_q\in U''(R)$, any $M_{pq}$ and $M_{mn}$ in $M_2$ are vertex disjoint. So, the matching in $Cl_2(R)$ is $M=M_1\cup M_2$. The matching number $\mu(Cl_2(R))$ in the clean graph $Cl_2(R)$ is determined as:
	
	\begin{align*}
		\mu(Cl_2(R))&= |M|\\
		&=|M_1|+|M_2|\\
		&= \frac{|U'(R)|(2^n-1)}{2}+\frac{|U''(R)|(2^n-1)}{2}\\
		&=\frac{(2^n-1)}{2}(|U'(R)|+|U''(R)|)\\
		&=\frac{(2^n-1)|U(R)|}{2}
	\end{align*}
	
	Since $|V(Cl_2(R)|=(2^n-1)|U(R)|$ and $\mu(Cl_2(R))=\frac{(2^n-1)|U(R)|}{2}$, therefore, the clean graph $Cl_2(R)$ of a ring $R$ contains a perfect matching.

	Since $U(R)=U'(R)\cup U''(R)$. Therefore, if $|U(R)|\neq2k$ for some positive integer $k$, then $|U'(R)|$ is odd as $U''(R)=\{u_j,u_l\in R : u_ju_l=1, u_j\neq u_l\}$. So, there exists one $u\in U'(R)$ that does not form a pair. The matching in subgraph correspond to $u$ is $\{(e_{2i-1},u)(e_{2i},u): u \in U'(R), e_{2i-1},e_{2i}\in Id(R)^* \text{ and }i=1,2,\cdots,\frac{2^n-2}{2}\}$. It is easy to observe that only one vertex $(e_{2^n-1},u)$ remains unsaturated. So, the matching number is $\left(\frac{|U(R)|(2^n-1)-1}{2}\right)$.   
\end{proof}




The following illustration will help to understand the above proof of perfect matching clean graph $Cl_2(R)$ of a ring $R$.

\begin{Illustration}
	Let $R$ be a commutative Artinian ring with unity with $|U(R)|=2k$ for some positive integer $k$. We partition the vertex set $V(Cl_2(R))$ of the clean graph $Cl_2(R)$. We partition $Cl_2(R)$; see \ref{remark 1}. Now, in Figure \ref{Figure 3}, we draw a perfect matching only. We can see all the vertices are saturated. Thus, $Cl_2(R)$ has a perfect matching.
	
	\begin{figure}[h!]
		\begin{center}
			\scalebox{0.8}{\begin{tikzpicture}
					
					\draw (-8,-7) node[below]{$V_1$};
					\fill (-8,7) circle (3pt) node[below left] {$v_{11}$}; 
					\fill (-8,6) circle (3pt) node[below left] {$v_{21}$}; 
					\fill (-8,5) circle (3pt) node[below left] {$v_{31}$}; 
					\fill (-8,4) circle (3pt) node[below left] {$v_{41}$};
					\fill (-8,3.5) circle (1pt); 
					\fill (-8,3) circle (1pt); 
					\fill (-8,2.5) circle (1pt);
					\fill (-8,2) circle (3pt) node[below left] {$v_{(r-1)1}$};
					\fill (-8,1) circle (3pt) node[below left] {$v_{r1}$};
					\fill (-8,0) circle (3pt) node[below left] {$v_{(r+1)1}$};
					\fill (-8,-1) circle (3pt) node[below left] {$v_{(r+2)1}$};
					
					\fill (-8,-2) circle (3pt) node[below left] {$v_{(r+3)1}$};
					\fill (-8,-3) circle (3pt) node[below left] {$v_{(r+4)1}$};
					
					\fill (-8,-3.5) circle (1pt);
					\fill (-8,-4) circle (1pt);
					\fill (-8,-4.5) circle (1pt);
					
					\fill (-8,-5) circle (3pt) node[below left] {$v_{(|U(R)|-1)1}$};
					\fill (-8,-6) circle (3pt) node[below left] {$v_{|U(R)|1}$};
					
					\draw (-5.5,-7) node[below]{$V_2$};
					\fill (-5.5,7) circle (3pt) node[below left] {$v_{12}$}; 
					\fill (-5.5,6) circle (3pt) node[below left] {$v_{22}$}; 
					\fill (-5.5,5) circle (3pt) node[below left] {$v_{32}$}; 
					\fill (-5.5,4) circle (3pt) node[below left] {$v_{42}$};
					\fill (-5.5,3.5) circle (1pt); 
					\fill (-5.5,3) circle (1pt); 
					\fill (-5.5,2.5) circle (1pt);
					\fill (-5.5,2) circle (3pt) node[below left] {$v_{(r-1)2}$};
					\fill (-5.5,1) circle (3pt) node[below left] {$v_{r2}$};
					\fill (-5.5,0) circle (3pt) node[below left] {$v_{(r+1)2}$};
					\fill (-5.5,-1) circle (3pt) node[below left] {$v_{(r+2)2}$};
					
					\fill (-5.5,-2) circle (3pt) node[below left] {$v_{(r+3)2}$};
					\fill (-5.5,-3) circle (3pt) node[below left] {$v_{(r+4)2}$};
					
					\fill (-5.5,-3.5) circle (1pt);
					\fill (-5.5,-4) circle (1pt);
					\fill (-5.5,-4.5) circle (1pt);
					
					\fill (-5.5,-5) circle (3pt) node[below left] {$v_{(|U(R)|-1)2}$};
					\fill (-5.5,-6) circle (3pt) node[below left] {$v_{|U(R)|2}$};

					
					\draw (-2.5,-7) node[below]{$V_3$};
					\fill (-2.5,7) circle (3pt) node[below left] {$v_{13}$}; 
					\fill (-2.5,6) circle (3pt) node[below left] {$v_{23}$}; 
					\fill (-2.5,5) circle (3pt) node[below left] {$v_{33}$}; 
					\fill (-2.5,4) circle (3pt) node[below left] {$v_{43}$};
					\fill (-2.5,3.5) circle (1pt); 
					\fill (-2.5,3) circle (1pt); 
					\fill (-2.5,2.5) circle (1pt);
					\fill (-2.5,2) circle (3pt) node[below left] {$v_{(r-1)3}$};
					\fill (-2.5,1) circle (3pt) node[below left] {$v_{r3}$};
					\fill (-2.5,0) circle (3pt) node[below left] {$v_{(r+1)3}$};
					\fill (-2.5,-1) circle (3pt) node[below left] {$v_{(r+2)3}$};
					
					\fill (-2.5,-2) circle (3pt) node[below left] {$v_{(r+3)3}$};
					\fill (-2.5,-3) circle (3pt) node[below left] {$v_{(r+4)3}$};
					
					\fill (-2.5,-3.5) circle (1pt);
					\fill (-2.5,-4) circle (1pt);
					\fill (-2.5,-4.5) circle (1pt);
					
					\fill (-2.5,-5) circle (3pt) node[below left] {$v_{(|U(R)|-1)3}$};
					\fill (-2.5,-6) circle (3pt) node[below left] {$v_{|U(R)|3}$};
					
					\draw (0,-7) node[below]{$V_4$};
					\fill (0,7) circle (3pt) node[below left] {$v_{14}$}; 
					\fill (0,6) circle (3pt) node[below left] {$v_{24}$}; 
					\fill (0,5) circle (3pt) node[below left] {$v_{34}$}; 
					\fill (0,4) circle (3pt) node[below left] {$v_{44}$};
					\fill (0,3.5) circle (1pt); 
					\fill (0,3) circle (1pt); 
					\fill (0,2.5) circle (1pt);
					\fill (0,2) circle (3pt) node[below left] {$v_{(r-1)4}$};
					\fill (0,1) circle (3pt) node[below left] {$v_{r4}$};
					\fill (0,0) circle (3pt) node[below left] {$v_{(r+1)4}$};
					\fill (0,-1) circle (3pt) node[below left] {$v_{(r+2)4}$};
					
					\fill (0,-2) circle (3pt) node[below left] {$v_{(r+3)4}$};
					\fill (0,-3) circle (3pt) node[below left] {$v_{(r+4)4}$};
					
					\fill (0,-3.5) circle (1pt);
					\fill (0,-4) circle (1pt);
					\fill (0,-4.5) circle (1pt);
					
					\fill (0,-5) circle (3pt) node[below left] {$v_{(|U(R)|-1)4}$};
					\fill (0,-6) circle (3pt) node[below left] {$v_{|U(R)|4}$};

					\fill  (1,7) circle (1pt)(1.3,7) circle (1pt)(1.6,7)  circle (1pt);
					\fill  (1,6) circle (1pt)(1.3,6) circle (1pt)(1.6,6)  circle (1pt);
					\fill  (1,5) circle (1pt)(1.3,5) circle (1pt)(1.6,5)  circle (1pt);
					\fill  (1,4) circle (1pt)(1.3,4) circle (1pt)(1.6,4)  circle (1pt);
					\fill  (1,2) circle (1pt)(1.3,2) circle (1pt)(1.6,2)  circle (1pt);
					\fill  (1,1) circle (1pt)(1.3,1) circle (1pt)(1.6,1)  circle (1pt);
					\fill  (1,0) circle (1pt)(1.3,0) circle (1pt)(1.6,0)  circle (1pt);
					\fill  (1,-1) circle (1pt)(1.3,-1) circle (1pt)(1.6,-1)  circle (1pt);
					\fill  (1,-2) circle (1pt)(1.3,-2) circle (1pt)(1.6,-2)  circle (1pt);
					\fill  (1,-3) circle (1pt)(1.3,-3) circle (1pt)(1.6,-3)  circle (1pt);
					\fill  (1,-5) circle (1pt)(1.3,-5) circle (1pt)(1.6,-5)  circle (1pt);
					\fill  (1,-6) circle (1pt)(1.3,-6) circle (1pt)(1.6,-6)  circle (1pt);
					
					
					\draw (3,-7) node[below]{$V_{2^n-3}$};
					\fill (3,7) circle (3pt) node[below left] {$v_{1{(2^n-3)}}$}; 
					\fill (3,6) circle (3pt) node[below left] {$v_{2{(2^n-3)}}$}; 
					\fill (3,5) circle (3pt) node[below left] {$v_{3{(2^n-3)}}$}; 
					\fill (3,4) circle (3pt) node[below left] {$v_{4{(2^n-3)}}$};
					\fill (3,3.5) circle (1pt); 
					\fill (3,3) circle (1pt); 
					\fill (3,2.5) circle (1pt);
					\fill (3,2) circle (3pt) node[below left] {$v_{(r-1){(2^n-3)}}$};
					\fill (3,1) circle (3pt) node[below left] {$v_{r{(2^n-3)}}$};
					\fill (3,0) circle (3pt) node[below left] {$v_{(r+1){(2^n-3)}}$};
					\fill (3,-1) circle (3pt) node[below left] {$v_{(r+2){(2^n-3)}}$};
					
					\fill (3,-2) circle (3pt) node[below left] {$v_{(r+3){(2^n-3)}}$};
					\fill (3,-3) circle (3pt) node[below left] {$v_{(r+4){(2^n-3)}}$};
					
					\fill (3,-3.5) circle (1pt);
					\fill (3,-4) circle (1pt);
					\fill (3,-4.5) circle (1pt);
					
					\fill (3,-5) circle (3pt) node[below left] {$v_{(|U(R)|-1){(2^n-3)}}$};
					\fill (3,-6) circle (3pt) node[below left] {$v_{|U(R)|{(2^n-3)}}$};
					
					
					\draw (6,-7) node[below]{$V_{2^n-2}$};
					\fill (6,7) circle (3pt) node[below left] {$v_{1{(2^n-2)}}$}; 
					\fill (6,6) circle (3pt) node[below left] {$v_{2{(2^n-2)}}$}; 
					\fill (6,5) circle (3pt) node[below left] {$v_{3{(2^n-2)}}$}; 
					\fill (6,4) circle (3pt) node[below left] {$v_{4{(2^n-2)}}$};
					\fill (6,3.5) circle (1pt); 
					\fill (6,3) circle (1pt); 
					\fill (6,2.5) circle (1pt);
					\fill (6,2) circle (3pt) node[below left] {$v_{(r-1){(2^n-2)}}$};
					\fill (6,1) circle (3pt) node[below left] {$v_{r{(2^n-2)}}$};
					\fill (6,0) circle (3pt) node[below left] {$v_{(r+1){(2^n-2)}}$};
					\fill (6,-1) circle (3pt) node[below left] {$v_{(r+2){(2^n-2)}}$};
					\fill (6,-2) circle (3pt) node[below left] {$v_{(r+3){(2^n-2)}}$};
					\fill (6,-3) circle (3pt) node[below left] {$v_{(r+4){(2^n-2)}}$};
					
					\fill (6,-3.5) circle (1pt);
					\fill (6,-4) circle (1pt);
					\fill (6,-4.5) circle (1pt);
					
					\fill (6,-5) circle (3pt) node[below left] {$v_{(|U(R)|-1){(2^n-2)}}$};
					\fill (6,-6) circle (3pt) node[below left] {$v_{|U(R)|{(2^n-2)}}$};
					
					\draw (9,-7) node[below]{$V_{2^n-1}$};
					\fill (9,7) circle (3pt) node[below left] {$v_{1{(2^n-1)}}$}; 
					\fill (9,6) circle (3pt) node[below left] {$v_{2{(2^n-1)}}$}; 
					\fill (9,5) circle (3pt) node[below left] {$v_{3{(2^n-1)}}$}; 
					\fill (9,4) circle (3pt) node[below left] {$v_{4{(2^n-1)}}$};
					\fill (9,3.5) circle (1pt); 
					\fill (9,3) circle (1pt); 
					\fill (9,2.5) circle (1pt);
					\fill (9,2) circle (3pt) node[below left] {$v_{(r-1){(2^n-1)}}$};
					\fill (9,1) circle (3pt) node[below left] {$v_{r{(2^n-1)}}$};
					\fill (9,0) circle (3pt) node[below left] {$v_{(r+1){(2^n-1)}}$};
					\fill (9,-1) circle (3pt) node[below left] {$v_{(r+2){(2^n-1)}}$};
					\fill (9,-2) circle (3pt) node[below left] {$v_{(r+3){(2^n-1)}}$};
					\fill (9,-3) circle (3pt) node[below left] {$v_{(r+4){(2^n-1)}}$};
					
					\fill (9,-3.5) circle (1pt);
					\fill (9,-4) circle (1pt);
					\fill (9,-4.5) circle (1pt);
					
					\fill (9,-5) circle (3pt) node[below left] {$v_{(|U(R)|-1){(2^n-1)}}$};
					\fill (9,-6) circle (3pt) node[below left] {$v_{|U(R)|{(2^n-1)}}$};
					\draw [line width=0.5mm] (-8,7) to [out=10,in=170](9,7); 
					\draw [line width=0.5mm] (-5.5,7) to (-2.5,7); 
					\draw [line width=0.5mm](0,7) to (0.5,7); 
					\draw [line width=0.5mm] (2.5,7) to (3,7); 
					\draw [line width=0.5mm](6,7) to (9,6); 
					\draw [line width=0.5mm] (-8,6) to (-5.5,6); 
					\draw[line width=0.5mm] (-2.5,6) to (0,6); 
					\draw [line width=0.5mm](3,6) to (6,6);
					\draw [line width=0.5mm] (-8,5)to [out=5,in=175](9,5); 
					\draw [line width=0.5mm] (-5.5,5)to (-2.5,5); 
					\draw [line width=0.5mm](0,5)to (0.5,5); 
					\draw [line width=0.5mm] (2.5,5)to (3,5); 
					\draw [line width=0.5mm] (-8,4) to (-5.5,4); 
					\draw[line width=0.5mm] (-2.5,4) to (0,4); 
					\draw [line width=0.5mm](3,4) to (6,4);
					\draw [line width=0.5mm] (-8,2) to [out=5,in=175](9,2); 
					
					\draw[line width=0.5mm] (6,5)to (9,4);
					\draw[line width=0.5mm] (6,2) to (9,1);
					
					\draw [line width=0.5mm] (-5.5,2) to (-2.5,2); 
					\draw [line width=0.5mm](0,2) to (0.5,2); 
					\draw [line width=0.5mm] (2.5,2) to (3,2); 
					\draw [line width=0.5mm] (-8,1) to (-5.5,1);
					\draw[line width=0.5mm] (-2.5,1) to (0,1);
					\draw [line width=0.5mm](3,1) to (6,1);
					\draw [line width=0.5mm](-8,0) to (-8,-1);
					\draw [line width=0.5mm](-5.5,-1) to (-5.5,0);
					\draw [line width=0.5mm](-2.5,-1) to (-2.5,0);
					\draw [line width=0.5mm](0,-1) to (0,0);
					\draw [line width=0.5mm](3,-1) to (3,0);
					\draw [line width=0.5mm](6,-1)to (6,0);
					\draw [line width=0.5mm](9,-1) to (9,0);
					\draw [line width=0.5mm](-8,-2) to (-8,-3);
					\draw [line width=0.5mm](-5.5,-2) to (-5.5,-3);
					\draw [line width=0.5mm](-2.5,-3) to (-2.5,-2);
					\draw [line width=0.5mm](0,-3) to (0,-2);
					\draw [line width=0.5mm](3,-3) to (3,-2);
					\draw [line width=0.5mm](6,-3) to (6,-2);
					\draw [line width=0.5mm](9,-3) to (9,-2);
					\draw [line width=0.5mm](-8,-6) to (-8,-5);
					\draw [line width=0.5mm](-5.5,-6) to (-5.5,-5);
					\draw [line width=0.5mm](-2.5,-6) to (-2.5,-5);
					\draw [line width=0.5mm](0,-6) to (0,-5); 
					\draw [line width=0.5mm](3,-6)to (3,-5);
					\draw [line width=0.5mm](6,-6) to (6,-5);
					\draw [line width=0.5mm](9,-6) to (9,-5);

			\end{tikzpicture}}
		\end{center}
		\caption{Perfect matching in $\mathbf{Cl_2(R)}$}\label{Figure 3}
	\end{figure}
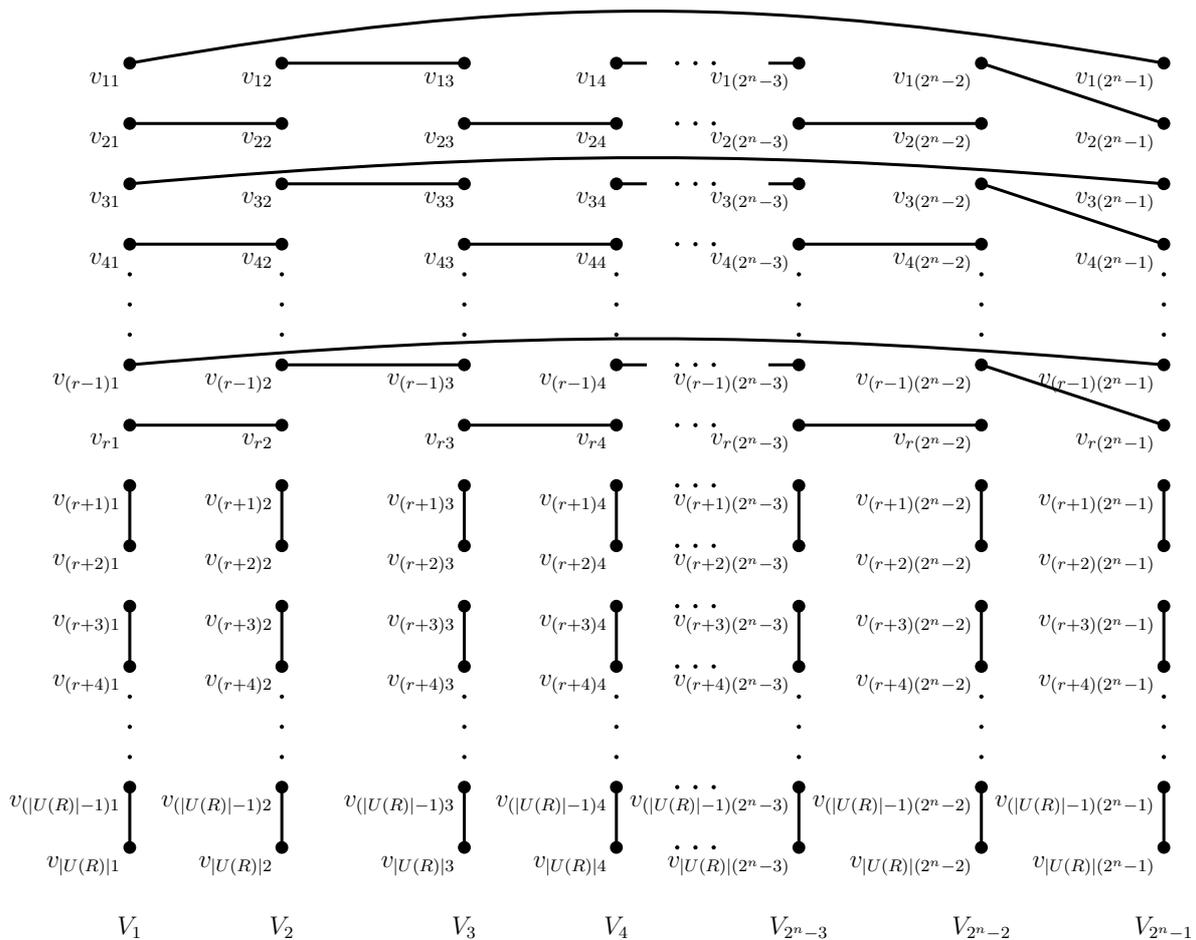
\end{Illustration}
\newpage
\section{Conclusion and further works}
In this paper, we have obtained the Wiener index and matching number of $Cl_2(R)$. Further, we have shown that $Cl_2(R)$ has a perfect matching. Also, we have given a formula for the number of self-invertible units in ring $\mathbb{Z}_n$.

In some sense, the unit product graph and the idempotent graph are related to the clean graph of rings. One can define a product of graphs such that the product of the idempotent graph and the unit product graph is the clean graph.

\end{document}